\theoremstyle{plain}
\newtheorem{thm}{Theorem}[section]
\newtheorem{prop}[thm]{Proposition}
\newtheorem{lemma}[thm]{Lemma}
\theoremstyle{definition}
\newtheorem{dfn}[thm]{Definition}
\newtheorem{hypo}[thm]{Hypothesis}
\theoremstyle{remark}
\newcommand{\D}{\mathrm{D}}
\newcommand{\SD}{\mathrm{SD}}
\newcommand{\Q}{\mathrm{Q}}
\begin{document}

\title{Universal deformation rings and tame blocks}

\author{Frauke M. Bleher}
\address{F.B.: Department of Mathematics\\University of Iowa\\
Iowa City, IA 52242-1419, U.S.A.}
\email{frauke-bleher@uiowa.edu}
\thanks{The first author was supported in part by  
NSA Grant H98230-11-1-0131.}
\author{Giovanna LLosent}
\address{G.L.: Department of Mathematics\\CSU
San Bernardino, CA 92407-2397, U.S.A.}
\email{gllosent@csusb.edu}
\author{Jennifer B. Schaefer}
\address{J.S.: Department of Mathematics and Computer Science\\
Dickinson College\\ Carlisle, PA 17013, U.S.A.}
\email{schaefje@dickinson.edu}
\subjclass[2010]{Primary 20C20; Secondary 20C15, 16G10}
\keywords{Universal deformation rings, tame blocks}

\begin{abstract}
Let $k$ be an algebraically closed field of positive characteristic, and let $W$ be the ring of infinite Witt 
vectors over $k$. Suppose $G$ is a finite group and  $B$ is a block of $kG$ of infinite tame 
representation type.
We find all finitely generated $kG$-modules $V$ that belong to $B$ and whose endomorphism ring 
is isomorphic to $k$ and determine the universal deformation ring $R(G,V)$ for each of these modules.
\end{abstract}

\maketitle

%%%%%%%%%%%%%%%%%%%%%%%%%%%%%%%%%%%%%%%%%%%%%%%%%%%%%%%%%
%% Introduction
%%%%%%%%%%%%%%%%%%%%%%%%%%%%%%%%%%%%%%%%%%%%%%%%%%%%%%%%%

\section{Introduction}
\label{s:intro}

Let $k$ be an algebraically closed field of characteristic $p>0$ and let $\mathcal{O}$ be a complete 
discrete valuation ring of characteristic $0$ with residue field $k$. Suppose $G$ is a finite group and 
$V$ is a finitely generated $kG$-module. It is a classical question to ask whether $V$ can be lifted to an
$\mathcal{O}$-free $\mathcal{O}G$-module. In \cite{green}, Green showed that this is 
always possible when $\mathrm{Ext}^2_{kG}(V,V)=0$. However, there are many cases when this Ext 
group is not zero and $V$ can still be lifted over $\mathcal{O}$. This lifting question can be seen as a 
special case of a more general deformation question which asks over which complete local commutative 
Noetherian $\mathcal{O}$-algebras with residue field $k$ the $kG$-module $V$ can be lifted. Since $k$ 
is algebraically closed, one usually takes $\mathcal{O}$ to be the ring $W=W(k)$ of infinite Witt vectors
over $k$. It was shown in \cite[Prop. 2.1]{bc}
that if the  \emph{stable} endomorphism ring of $V$ is isomorphic to $k$, then $V$ has a so-called 
universal deformation ring $R(G,V)$. This ring is universal in the sense that every isomorphism class of
lifts of $V$ over a complete local commutative Noetherian ring $R$ with residue field $k$ is associated to
a unique morphism $R(G,V)\to R$ (see Section \ref{s:prelim}).

Suppose that the stable endomorphism ring of $V$ is isomorphic to $k$.
In \cite{bc} (resp. \cite{bl}), the isomorphism types of $R(G,V)$ were determined for all such $V$ 
belonging to a cyclic block (resp. to a block with Klein four defect groups). In \cite{diloc,3sim,2sim}, 
the rings $R(G,V)$ were determined for all such $V$ belonging to various tame blocks with dihedral 
defect groups. By \cite{gowa}, these blocks include in particular all  blocks 
that are Morita equivalent to principal blocks with dihedral defect groups.
For other tame blocks, however, usually much less is known with respect to their representation
theory. For this reason, it still remains to systematically study all blocks of infinite tame representation
type with respect to universal deformation rings, and this is the goal of the present paper.
The key tools used to determine the universal deformation rings in all of the above cases are results
from modular and ordinary representation theory due to Brauer, Erdmann \cite{erd}, 
Linckelmann \cite{linckel,linckel1}, Carlson-Th\'{e}venaz \cite{carl2}, and others.

The main motivation for studying universal deformation rings for finite groups is that this case helps
understand ring theoretic properties of universal deformation rings for profinite groups $\Gamma$.
The latter have become an important tool in number theory, in particular if $\Gamma$ is a
profinite Galois group (see e.g. \cite{wita,wi}, \cite{bcdt}, \cite{khawin} and their references).
In \cite{lendesmit}, de Smit and Lenstra showed
that if $\Gamma$ is an arbitrary profinite group and $V$ is a finite
dimensional vector space over $k$ with a continuous $\Gamma$-action which has a universal
deformation ring  $R(\Gamma,V)$, then $R(\Gamma,V)$ is the inverse limit of the universal 
deformation rings $R(G,V)$ when $G$ ranges over all finite discrete quotients of $\Gamma$ through 
which the $\Gamma$-action on $V$ factors. Thus to answer questions about the ring structure of 
$R(\Gamma,V)$, it is natural to first consider the case when $\Gamma=G$ is finite.
When determining $R(G,V)$, the main advantage is that one can make use of powerful techniques 
that are not available for arbitrary profinite groups $\Gamma$,
such as decomposition matrices, Auslander-Reiten theory and the Green correspondence.

Suppose now that $B$ is a block of $kG$ of infinite tame representation type.  In
\cite{erd}, Erdmann gave a list of all possible quivers and relations 
which determine the basic algebra $\Lambda$ of $B$ up to isomorphism.
In the case when the defect groups of $B$ are dihedral, she moreover showed
that $\Lambda/\mathrm{soc}(\Lambda)$ is a special biserial algebra. This means that
in this case one can give a complete list of isomorphism classes of indecomposable 
$\Lambda$-modules using so-called strings and bands  (see \cite{buri}).
In particular, this made it possible in \cite{diloc,3sim,2sim} to determine all $B$-modules
whose \emph{stable} endomorphism rings are isomorphic to $k$ when $B$ has dihedral defect groups. 
For arbitrary blocks $B$ of infinite tame representation type, one usually cannot give
such a complete list. 
However,  we will show that it is still possible to determine all isomorphism classes of 
$B$-modules whose endomorphism rings are isomorphic to $k$. 

Our main result is as follows; more precise statements can be found in 
Lemma \ref{lem:local} and Theorem \ref{thm:maindetail}.

\begin{thm}
\label{thm:bigmain}
Suppose $G$ is a finite group, $B$ is a block of $kG$ of infinite tame representation type, and $D$ is a
defect group of $B$ of order $p^n$. Let $V$ be a $kG$-module belonging to $B$ whose endomorphism
ring is isomorphic to $k$, and let $R(G,V)$ be its universal deformation ring. 
Let $d^1(V)=\mathrm{dim}_k\,\mathrm{Ext}^1_{kG}(V,V)$. Then $d^1(V) \in\{0,1,2\}$.
\begin{enumerate}
\item[(i)] If $d^1(V)=0$, then either $R(G,V)\cong W$ or $R(G,V)\cong k$.
\item[(ii)] If $d^1(V)=1$, then either
	\begin{enumerate}
	\item[(a)] $R(G,V) \cong W[[t]]/(t^p-p\mu\, t))$ for some non-zero $\mu\in W$, or
	\item[(b)] $R(G,V) \cong W[[t]]/(t^p,p\,t)$, or
	\item[(c)] $n\ge 4$ and
	there exists a monic polynomial $q_n(t)\in W[t]$ of degree $p^{n-2}-1$,
	which depends only on $D$ and which can be given explicitly, such that either
	$$R(G,V) \cong W[[t]]/(q_n(t))\mbox{ or }R(G,V) \cong W[[t]]/(t\,q_n(t)),p\,q_n(t)).$$
	\end{enumerate}
\item[(iii)] If $d^1(V)=2$, then $R(G,V)\cong W[[t_1,t_2]]/(t_1^p-p\,t_1,t_2^p-p\,t_2)$.
\end{enumerate}
In all cases, $R(G,V)$ is isomorphic to a subquotient algebra of the group algebra $WD$,
giving a positive answer to \cite[Question 1.1]{bc}.
\end{thm}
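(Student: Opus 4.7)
The overall plan is to reduce to a computation on the basic algebra $\Lambda$ of $B$ and then to exploit Erdmann's classification \cite{erd}. Since Morita equivalence preserves both $\mathrm{End}_{kG}(V)$ and the universal deformation ring, I may replace $B$ by $\Lambda$ throughout. Because $B$ has infinite tame representation type, its defect group $D$ is a nontrivial $2$-group, so every projective indecomposable $B$-module has $k$-dimension at least two; hence the hypothesis $\mathrm{End}_{kG}(V)\cong k$ forces $V$ to be indecomposable and non-projective. Consequently the stable endomorphism ring of $V$ is also $k$, and $R(G,V)$ exists in the sense of \cite{bc}.

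By Erdmann's theorem the algebra $\Lambda$ belongs to one of finitely many families, distinguished by whether $D$ is dihedral, semidihedral, or generalized quaternion, and further subdivided by the number of simples (one, two, or three). I would work through these families separately, in each case enumerating the indecomposable $\Lambda$-modules with endomorphism ring $k$ directly from Erdmann's presentations. In the dihedral case, $\Lambda/\mathrm{soc}(\Lambda)$ is special biserial, so the relevant modules can be read off using the Butler--Ringel string and band calculus, refining the analysis of \cite{diloc,3sim,2sim}; in the semidihedral and quaternion cases $\Lambda/\mathrm{soc}(\Lambda)$ is no longer special biserial and the list must be produced by hand from the quiver and relations. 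Once $V$ is pinned down, $d^1(V)$ is a direct computation, and I expect the bound $d^1(V)\leq 2$ to be forced by the restrictive assumption $\mathrm{End}_\Lambda(V)\cong k$ together with the shape of Erdmann's algebras.

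For each such $V$, I would determine $R(G,V)$ by constructing an explicit versal lift inside a suitable subquotient of the group ring $WD$. The tangent space of the deformation functor equals $\mathrm{Ext}^1_{kG}(V,V)$ and so has dimension $d^1(V)$, which provides a surjection $W[[t_1,\dots,t_{d^1(V)}]]\to R(G,V)$. The relations defining $R(G,V)$ are then determined by the obstruction calculus along small extensions: for $d^1(V)=0$ one only has to decide whether $V$ lifts over $W$, giving $R(G,V)\cong W$ or $R(G,V)\cong k$ accordingly; for $d^1(V)=1$ or $2$ the explicit lift inside $WD$ provides a candidate ring, and the polynomial relations in parts (ii) and (iii) are obtained by carrying out the stepwise lifting from $k$ to $W/p^m$ and matching against this candidate. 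Universality is verified against the defining property by showing that every lift of $V$ over an Artinian local $W$-algebra with residue field $k$ factors uniquely through the candidate.

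The principal technical obstacle is case (ii)(c), where $n\geq 4$ and the polynomial $q_n(t)$ of degree $p^{n-2}-1$ intervenes. Identifying $q_n(t)$ intrinsically, presumably as the minimal polynomial describing the action of a generator of a large cyclic subgroup of $D$ on a distinguished $W$-lattice inside $WD$, and then matching two a priori different presentations of $R(G,V)$, is a delicate piece of work that has no analogue in the other subcases. The semidihedral and generalized quaternion subcases present additional combinatorial complications because the string-and-band machinery is unavailable. Once these are dispatched, the final claim that $R(G,V)$ is always a subquotient of $WD$, and hence the positive answer to \cite[Question 1.1]{bc}, falls out directly from the constructions used to exhibit the lifts.
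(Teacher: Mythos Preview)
Your broad outline---pass to the basic algebra $\Lambda$, enumerate the finitely many modules with endomorphism ring $k$ family by family, then compute $R(G,V)$ case by case---matches the paper. But the mechanism you propose for the last step, namely ``constructing an explicit versal lift inside a suitable subquotient of the group ring $WD$'', is where the proposal parts ways from the paper and is not obviously viable. For non-local $B$ there is no Morita equivalence between $B$ and $kD$ (or between $\hat B$ and $WD$), so there is no direct way to locate lifts of $V$ inside $WD$; the subquotient-of-$WD$ statement is a \emph{conclusion} the paper draws after $R(G,V)$ has been identified, not an input to the computation.

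What the paper actually does is split off the local case first: there $B$ is nilpotent, Puig's theorem gives a Morita equivalence $\hat B\simeq WD$, and $R(G,S)\cong R(D,k)\cong W[\mathbb{Z}/2\times\mathbb{Z}/2]$; this is precisely where $d^1(V)=2$ occurs, a fact your proposal does not isolate. For non-local $B$ the paper proceeds in two steps: first compute the mod $p$ deformation ring $R(G,V)/pR(G,V)$ purely inside $\Lambda$ (by exhibiting the unique non-split self-extension when $d^1(V)=1$), and then determine the full $R(G,V)$ using the decomposition matrix of $B$ together with \cite[Prop.~(23.7)]{CR} to count lifts over $W$, invoking \cite[Lem.~2.1]{bc5} to pin down the ring. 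For case~(ii)(c)---which you correctly flag as the hard one---the paper does not argue directly at all: it cites \cite{brauerpaper}, where the polynomial $q_n(t)$ and the corresponding deformation rings are obtained from Brauer's generalized decomposition numbers. None of the three ingredients (nilpotent blocks via Puig, ordinary decomposition matrices, generalized decomposition numbers) appears in your proposal, and it is not clear that obstruction calculus alone would reproduce the exact relations without them.
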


To prove Theorem \ref{thm:bigmain}, we first determine all $B$-modules $V$ whose endomorphism
rings are isomorphic to $k$ by finding the $\Lambda$-modules $M$ that correspond to $V$ under
the Morita equivalence between $B$ and its basic algebra $\Lambda$. The main idea is
to use the decription of the projective indecomposable modules to classify
certain $\Lambda$-modules that have a short radical series. It turns out that the $\Lambda$-modules
$M$ we need to find have at most 4 composition factors, resulting in a finite list of isomorphism classes of
$B$-modules $V$ whose endomorphism rings are isomorphic to $k$.
We then determine the universal deformation ring $R(G,V)$ for each of these modules $V$.
Computing $d^1(V)$ shows that the case $d^1(V)=2$ only occurs when $B$ is local, i.e. 
when there is a unique isomorphism class of simple $B$-modules. This allows us to use 
nilpotent blocks to prove part (iii) of Theorem \ref{thm:bigmain}. For non-local $B$, 
$R(G,V)$ is determined in two steps:
Using the basic algebra $\Lambda$, we first determine the universal
mod $p$ deformation ring $R(G,V)/p R(G,V)$. Using decomposition matrices
and generalized decomposition numbers, we then determine the full universal deformation ring 
$R(G,V)$.
In particular, we use the results from \cite{brauerpaper} to prove part (ii)(c) of Theorem \ref{thm:bigmain}.

The paper is organized as follows. 
In Section  \ref{s:prelim}, we review the basic definitions and results
concerning universal deformation rings of modules for finite groups. 
In Section \ref{s:tame}, we let $B$ be a block of infinite tame representation type and
set up the notation for the remainder of the paper.
We also deal with the case when $B$ is a local block
(see Lemma \ref{lem:local}). 
For the remainder of the paper, we let $B$ be non-local.
In Section \ref{s:endo}, we determine all $B$-modules $V$ whose endomorphism rings are isomorphic
to $k$ (see Proposition \ref{prop:endok}).
In Section \ref{s:udr}, we then determine the universal deformation ring $R(G,V)$ for each
such module $V$ (see Theorem \ref{thm:maindetail}). This, together with
Lemma \ref{lem:local}, proves Theorem \ref{thm:bigmain}.

%%%%%%%%%%%%%%%%%%%%%%%%%%%%%%%%%%%%%%%%%%%%%%%%%%%%%%%%%
%% Preliminaries 
%%%%%%%%%%%%%%%%%%%%%%%%%%%%%%%%%%%%%%%%%%%%%%%%%%%%%%%%%

\section{Preliminaries}
\label{s:prelim}

In this section, we give a brief introduction to deformation rings and deformations. 
For more background material, we refer the reader to \cite{maz1} and \cite{lendesmit}.

Let $k$ be an algebraically closed field of characteristic $p>0$, and let $W=W(k)$ be the ring of infinite 
Witt vectors over $k$.
Let $\hat{\mathcal{C}}$ be the category of all complete local commutative Noetherian 
rings with residue field $k$. Note that all rings in $\hat{\mathcal{C}}$ have a natural $W$-algebra structure.
The morphisms in $\hat{\mathcal{C}}$ are continuous $W$-algebra 
homomorphisms which induce the identity map on $k$.

Suppose $G$ is a finite group and $V$ is a finitely generated $kG$-module. 
A \emph{lift} of $V$ over an object $R$ in $\hat{\mathcal{C}}$ is a pair $(M,\phi)$ where $M$ is a finitely 
generated $RG$-module which is free over $R$, and $\phi:k\otimes_R M\to V$ is an isomorphism of 
$kG$-modules. Two lifts $(M,\phi)$ and $(M',\phi')$ of $V$ over $R$ are isomorphic if there is an 
isomorphism $f:M\to M'$ with $\phi=\phi'\circ (k\otimes f)$. The isomorphism class $[M,\phi]$ of a lift 
$(M,\phi)$ of $V$ over $R$ is called a \emph{deformation} of $V$ over $R$, and the set of all such deformations 
is denoted by $\mathrm{Def}_G(V,R)$. The deformation functor
$$\hat{F}_V:\hat{\mathcal{C}} \to \mathrm{Sets}$$ 
is a covariant functor which
sends an object $R$ in $\hat{\mathcal{C}}$ to $\mathrm{Def}_G(V,R)$ and a morphism 
$\alpha:R\to R'$ in $\hat{\mathcal{C}}$ to the map $\mathrm{Def}_G(V,R) \to
\mathrm{Def}_G(V,R')$ defined by $[M,\phi]\mapsto [R'\otimes_{R,\alpha} M,\phi_\alpha]$, where  
$\phi_\alpha=\phi$ after identifying $k\otimes_{R'}(R'\otimes_{R,\alpha} M)$ with $k\otimes_R M$.

Suppose there exists an object $R(G,V)$ in $\hat{\mathcal{C}}$ and a deformation 
$[U(G,V),\phi_U]$ of $V$ over $R(G,V)$ with the following property:
For each $R$ in $\hat{\mathcal{C}}$ and for each lift $(M,\phi)$ of $V$ over $R$ there exists 
a morphism $\alpha:R(G,V)\to R$ in $\hat{\mathcal{C}}$ such that $\hat{F}_V(\alpha)([U(G,V),\phi_U])=
[M,\phi]$, and moreover $\alpha$ is unique if $R$ is the ring of dual numbers
$k[\epsilon]/(\epsilon^2)$. Then $R(G,V)$ is called the \emph{versal deformation ring} of $V$ and 
$[U(G,V),\phi_U]$ is called the \emph{versal deformation} of $V$. If the morphism $\alpha$ is
unique for all $R$ and all lifts $(M,\phi)$ of $V$ over $R$, 
then $R(G,V)$ is called the \emph{universal deformation ring} of $V$ and $[U(G,V),\phi_U]$ is 
called the \emph{universal deformation} of $V$. In other words, $R(G,V)$ is universal if and only if
$R(G,V)$ represents the functor $\hat{F}_V$ in the sense that $\hat{F}_V$ is naturally isomorphic to 
the Hom functor $\mathrm{Hom}_{\hat{\mathcal{C}}}(R(G,V),-)$. 

By \cite{maz1}, every finitely generated $kG$-module $V$ has a versal deformation ring $R(G,V)$.
By \cite[Prop. 2.1]{bc}, if the {\bf stable} endomorphism ring $\underline{\mathrm{End}}_{kG}(V)$ 
is isomorphic to $k$, then $R(G,V)$ is universal.

Note that the above definition of deformations can be weakened as follows.
Given a lift $(M,\phi)$ of $V$ over a ring $R$ in 
$\hat{\mathcal{C}}$, define the corresponding \emph{weak deformation} to be
the isomorphism class of $M$ as an $RG$-module, without taking into account the specific 
isomorphism $\phi:k\otimes_RM\to V$. 
In general, a weak deformation of $V$ over $R$ identifies more lifts than a deformation of $V$ 
over $R$
that respects the isomorphism $\phi$ of a representative $(M,\phi)$.
However, if the stable
endomorphism ring $\underline{\mathrm{End}}_{kG}(V)$ is isomorphic to $k$, these two 
definitions
of deformations coincide (see  \cite[Remark 2.1]{3quat}).

%%%%%%%%%%%%%%%%%%%%%%%%%%%%%%%%%%%%%%%%%%%%%%%%%%%%%%%%%
%% Tame blocks 
%%%%%%%%%%%%%%%%%%%%%%%%%%%%%%%%%%%%%%%%%%%%%%%%%%%%%%%%%

\section{Tame blocks}
\label{s:tame}

We make the following assumptions for the remainder of the paper:

\begin{hypo}
\label{hyp:alltheway}
Let $k$ be an algebraically closed field of positive characteristic $p$, and let 
$W=W(k)$ be the ring of infinite Witt vectors over $k$.
Suppose $G$ is a finite group, $B$ is a block of $kG$ of infinite tame representation type, 
and $D$ is a defect group of $B$ of order $p^n$. 
\end{hypo}

It follows from \cite{bd,br,hi} that $p=2$, $n\ge 2$, and $D$ is dihedral, semidihedral or generalized 
quaternion. 
In particular, we have $n\ge 2$ if $D$ is dihedral, $n\ge 3$ if $D$ is generalized quaternion, 
and $n\ge 4$ if $D$ is semidihedral.
By \cite{brIV,brauer2,olsson}, it follows that there are at most three isomorphism classes of simple
$B$-modules. 

We first consider the case when $B$ in Hypothesis \ref{hyp:alltheway} is local, i.e. when there is precisely
one isomorphism class of simple $B$-modules. 
We obtain the following result.

\begin{lemma}
\label{lem:local}
Assume Hypothesis $\ref{hyp:alltheway}$, and that $B$ is local. Let $S$ be a simple $kG$-module
belonging to $B$. Then $S$ is, up to isomorphism, the only $kG$-module belonging to $B$ whose 
endomorphism ring is isomorphic to $k$.
We have $\mathrm{dim}_k\,\mathrm{Ext}^1_{kG}(S,S)=2$ and
$R(G,S)\cong W[\mathbb{Z}/2\times\mathbb{Z}/2]$. In particular,
$R(G,S)\cong W[[t_1,t_2]]/(t_1^p-p\,t_1,t_2^p-p\,t_2)$ and $R(G,S)$ is isomorphic to 
a subquotient algebra of $WD$.
\end{lemma}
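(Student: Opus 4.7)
The plan is to reduce to the defect group $D$ via the theory of nilpotent blocks and then carry out a direct character computation. Since $B$ has a unique isomorphism class of simple modules, the inertial quotient of $B$ is trivial, so $B$ is a nilpotent block in the sense of Brou\'e--Puig. By Puig's theorem on nilpotent blocks, $B$ is Morita equivalent to $kD$, and this Morita equivalence lifts to a Morita equivalence between the corresponding block of $WG$ and $WD$. In particular, universal deformation rings are preserved under this equivalence, and the unique simple $B$-module $S$ corresponds to the trivial $kD$-module $k_D$. Thus it suffices to prove the analogous statements with $(G,S)$ replaced by $(D,k_D)$.

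Next I would verify that $k_D$ is, up to isomorphism, the only $kD$-module with endomorphism ring $k$. Since $D$ is a $2$-group, $k_D$ is its only simple module, so every nonzero $kD$-module has both a trivial submodule and a trivial quotient. If $M$ is an indecomposable $kD$-module with $\dim_k M \geq 2$, then composing a surjection $\pi\colon M \to k_D$ with an injection $\iota\colon k_D \to M$ produces a nonzero endomorphism $\iota \circ \pi$ of rank one, which cannot be a scalar multiple of $\mathrm{id}_M$. Hence $\mathrm{End}_{kD}(M)\neq k$, and the claim transfers back to $B$ via the Morita equivalence.

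For the Ext computation, $\mathrm{Ext}^1_{kG}(S,S)\cong \mathrm{Ext}^1_{kD}(k_D,k_D)\cong \mathrm{Hom}(D^{\mathrm{ab}},k)$. A direct inspection shows $D^{\mathrm{ab}}\cong \mathbb{Z}/2\times\mathbb{Z}/2$ in each of the admissible cases of Hypothesis \ref{hyp:alltheway} (dihedral of order $\geq 4$, generalized quaternion of order $\geq 8$, semidihedral of order $\geq 16$), so this Ext group is two-dimensional.

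Finally, to identify $R(D,k_D)$: since $k_D$ is one-dimensional, a lift over an object $R$ of $\hat{\mathcal{C}}$ is precisely a character $\chi\colon D\to R^{\times}$ with $\chi\equiv 1\pmod{\mathfrak{m}_R}$, and any such character factors through $D^{\mathrm{ab}}\cong(\mathbb{Z}/2)^2$. The resulting deformation functor is represented by $W[D^{\mathrm{ab}}]$, and the substitution $t_i = 1-g_i$ (for generators $g_i$ of the two $\mathbb{Z}/2$ factors) yields the stated presentation $W[[t_1,t_2]]/(t_1^p-pt_1,\, t_2^p-pt_2)$. Since $W[D^{\mathrm{ab}}]$ is visibly a quotient of $WD$, the subquotient claim follows. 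The main obstacle in the argument is verifying that Puig's Morita equivalence interacts correctly with deformation theory over $W$, so that $R(G,S)\cong R(D,k_D)$; once this identification is in hand, everything else reduces to a direct character-theoretic computation.
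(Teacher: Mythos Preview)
Your approach is essentially the paper's: reduce via Puig's theorem for nilpotent blocks to $(D,k_D)$, use a socle/top argument for the uniqueness of $S$, and identify $R(D,k_D)\cong W[D^{\mathrm{ab}}]\cong W[\mathbb{Z}/2\times\mathbb{Z}/2]$ (the paper cites Mazur for this last identification where you argue directly via characters). The one place to tighten is your justification of nilpotency: the chain ``unique simple $\Rightarrow$ trivial inertial quotient $\Rightarrow$ nilpotent'' is not valid for arbitrary blocks, since nilpotency requires the inertial quotients at \emph{all} $B$-subpairs to be $p$-groups, not just the one at $D$; the paper handles this by citing a result specific to tame blocks rather than arguing abstractly.
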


\begin{proof}
Recall that $p=2$.
Since $B$ is local and of infinite tame representation type, it follows that $B$ is \emph{nilpotent} 
in the sense of \cite{brouepuig} (see e.g. \cite[Sect. 2.5]{linckelkessar}). Let $\hat{B}$ be the block 
of $WG$ corresponding to $B$. Then $\hat{B}$ is also nilpotent. The main result of \cite{puig}
implies that $B$ is Morita equivalent to $kD$ and $\hat{B}$ is Morita equivalent to $WD$
(see \cite[Sect. 1.4]{puig}). 
Since every non-zero $B$-module has a non-zero socle and a non-zero radical quotient, it is
immediate that, up to isomorphism, the only $kG$-module belonging to $B$ whose 
endomorphism ring is isomorphic to $k$ is $S$. Using the Morita equivalence between
$\hat{B}$ and $WD$, it follows for example from \cite[Prop. 2.5]{bl} that $R(G,S)\cong
R(D,k)$ when $k$ denotes the trivial simple $kD$-module (which corresponds to $S$ under
the Morita equivalence). By \cite[Sect. 1.4]{maz1}, $R(D,k)$ is isomorphic to the group ring over $W$ of
the maximal abelian $p$-quotient of $D$. Since $p=2$ and $D$ is dihedral, semidihedral or
generalized quaternion, the maximal abelian $2$-quotient of $D$ is isomorphic to 
$\mathbb{Z}/2\times \mathbb{Z}/2$, which proves Lemma \ref{lem:local}.
\end{proof}

Assume Hypothesis \ref{hyp:alltheway}, and that $B$ is \textbf{non-local}.
From Erdmann's classification of all blocks of tame representation type 
in \cite{erd}, it follows that the quiver and relations of the basic algebra
of $B$ can be given explicitly and that, up to Morita equivalence, there
are 24 families of non-local blocks $B$. 
We use the description of these families as given in 
\cite[Sect. 4]{brauerpaper}, where Erdmann's results in \cite{erdsemid,erd} and 
\cite[Prop. 4.2]{holm} were combined with Eisele's results in \cite{eisele}, giving the
list in Figure \ref{fig:list}. Note that $\D$, or $\SD$, or $\Q$ in the name indicates that the defect groups of
$B$ are dihedral, or semidihedral, or generalized quaternion, respectively.

\begin{figure}[ht] 
\caption{\label{fig:list} The list of basic algebras from \cite[Sect. 4]{brauerpaper}
that are Morita equivalent to a non-local block $B$ satisfying Hypothesis \ref{hyp:alltheway}.}
\bigskip
\begin{itemize}
\item %Dihedral defect groups:
$\D(2\mathcal{A})$, $\D(2\mathcal{B})$, $\D(3\mathcal{A})_1$, $\D(3\mathcal{B})_1$, $\D(3\mathcal{K})$;
\item %Semidihedral defect groups:
$\SD(2\mathcal{A})_1(c)$, $\SD(2\mathcal{A})_2(c)$, 
$\SD(2\mathcal{B})_1(c)$, $\SD(2\mathcal{B})_2(c)$, $\SD(2\mathcal{B})_4(c)$,
$\SD(3\mathcal{A})_1$, $\SD(3\mathcal{B})_1$, $\SD(3\mathcal{B})_2$,
$\SD(3\mathcal{C})_{2,1}$, $\SD(3\mathcal{C})_{2,2}$, $\SD(3\mathcal{D})$,
$\SD(3\mathcal{H})_1$, $\SD(3\mathcal{H})_2$;
\item %Generalized quaternion defect groups:
$\Q(2\mathcal{A})(c)$, $\Q(2\mathcal{B})_1(c)$, $\Q(2\mathcal{B})_2(p,a,c)$,
$\Q(3\mathcal{A})_2$, $\Q(3\mathcal{B})$, $\Q(3\mathcal{K})$.
\end{itemize}
\end{figure}

We will also make use of the decomposition matrix for each non-local block $B$, including
the order of the ordinary irreducible characters, as given in \cite[Appendix]{brauerpaper}.
Note that $B$ always contains exactly 4 ordinary irreducible characters of height 0 and, unless
$D$ is quaternion of order 8, exactly $2^{n-2}-1$ ordinary irreducible characters of height 1. 
If $D$ is quaternion of order 8, $B$ contains exactly 3 ordinary irreducible characters of height 1. 
If $n\ge 4$ then the family of $2^{n-2}-1$ ordinary irreducible characters of height 1
all define the same Brauer character on restricting to the 2-regular conjugacy classes of $G$. 
If $D$ is generalized quaternion or semidihedral, there may be additional ordinary 
irreducible characters of height $n-2$. In the decomposition matrices in \cite[Appendix]{brauerpaper}, 
the 4 ordinary irreducible characters of height 0 are listed first, followed by the family of $2^{n-2}-1$ 
ordinary irreducible characters of height 1, and finally the ordinary irreducible characters 
of height $n-2$ if they exist.

For each algebra $\Lambda$ in Figure \ref{fig:list}, we use the following notation for certain modules
of small length.

\begin{dfn}
\label{def:small}
Assume Hypothesis $\ref{hyp:alltheway}$, and that $B$ is non-local.
Let $\Lambda=kQ/I$ be a basic algebra such that
$B$ is Morita equivalent to $\Lambda$, where we assume $\Lambda$ is one of the algebras
in Figure \ref{fig:list}.
For each vertex $j$ in $Q$, let $S_j$ denote a simple $\Lambda$-module corresponding to $j$.
\begin{enumerate}
\item[(a)] Let $v_1,v_2,\ldots,v_\ell$ be (not necessarily distinct) vertices of $Q$. If there 
exists, up to isomorphism, a unique uniserial $\Lambda$-module with descending composition
factors $S_{v_1},S_{v_2},\ldots,S_{v_\ell}$, we denote such a $\Lambda$-module by
$$S_{v_1,v_2,\ldots,v_\ell}=\begin{array}{c} S_{v_1}\\S_{v_2}\\:\\:\\S_{v_\ell}\end{array}$$

\item[(b)] Let $u,v,w$ be (not necessarily distinct) vertices of $Q$. If there 
exists, up to isomorphism, a unique indecomposable $\Lambda$-module with descending radical
factors $S_u,S_v\oplus S_w$, we denote such a $\Lambda$-module by
$$T_{u,v\oplus w}=\begin{array}{cc}\multicolumn{2}{c}{S_u}\\S_v&S_w\end{array}$$
If there 
exists, up to isomorphism, a unique indecomposable $\Lambda$-module with descending radical
factors $S_v\oplus S_w,S_u$, we denote such a $\Lambda$-module by
$$T_{v\oplus w,u}=\begin{array}{cc}S_v&S_w\\\multicolumn{2}{c}{S_u}\end{array}$$
\end{enumerate}
\end{dfn}

%%%%%%%%%%%%%%%%%%%%%%%%%%%%%%%%%%%%%%%%%%%%%%%%%%%%%%%%%
%% Modules with endomorphism ring $k$
%%%%%%%%%%%%%%%%%%%%%%%%%%%%%%%%%%%%%%%%%%%%%%%%%%%%%%%%%

\section{Modules with endomorphism ring $k$}
\label{s:endo}

We assume Hypothesis \ref{hyp:alltheway}, and that $B$ is \textbf{non-local}. 
In this section, we determine all finitely generated
$B$-modules whose endomorphism ring is isomorphic to $k$.

\begin{prop}
\label{prop:endok}
Assume Hypothesis $\ref{hyp:alltheway}$, and that $B$ is non-local.
Let $\Lambda=kQ/I$ be a basic algebra such that
$B$ is Morita equivalent to $\Lambda$, where we assume $\Lambda$ is one of the algebras
in Figure $\ref{fig:list}$.
Let $\mathcal{E}$ be a complete set of representatives of non-isomorphic 
$kG$-modules $V$ belonging to $B$ with $\mathrm{End}_{kG}(V)\cong k$.
Let $\mathcal{E}_\Lambda$ be a set of $\Lambda$-modules
that correspond to the modules in $\mathcal{E}$ under the Morita equivalence between
$B$ and $\Lambda$.
Using the notation from Definition $\ref{def:small}$, $\mathcal{E}_\Lambda$ is given as follows:

\begin{enumerate}
\item[(i)] If $Q\in\{2\mathcal{A},2\mathcal{B}\}$ and $\Lambda\not\in
\{\SD(2\mathcal{B})_4(c),\Q(2\mathcal{B})_2(p,a,c)\}$, then 
$$\mathcal{E}_\Lambda=\{S_0, S_1,S_{01},S_{10},S_{001},S_{100}\}.$$
If $\Lambda\in\{\SD(2\mathcal{B})_4(c),\Q(2\mathcal{B})_2(p,a,c)\}$, then 
$$\mathcal{E}_\Lambda=\{S_0, S_1, S_{01},S_{10}\}.$$

\item[(ii)] If $Q\in\{3\mathcal{A},3\mathcal{B}\}$ and $\Lambda\neq \SD(3\mathcal{B})_1$, then
$$\mathcal{E}_\Lambda=\{S_0, S_1, S_2,S_{01},S_{10},S_{02},S_{20},
S_{102},S_{201},S_{0102},S_{2010},S_{0201},S_{1020},T_{0,1\oplus 2},T_{1\oplus 2,0}\}.$$
If $\Lambda=\SD(3\mathcal{B})_1$, then 
$$\mathcal{E}_\Lambda=\{S_0, S_1, S_2,S_{01},S_{10},S_{02},S_{20},
S_{102},S_{201},S_{0201},S_{1020},T_{0,1\oplus 2},T_{1\oplus 2,0}\}.$$

\item[(iii)] If $Q=3\mathcal{C}$, then 
$$\mathcal{E}_\Lambda=\{S_0, S_1, S_2,S_{01},S_{10},S_{02},S_{20},
S_{102},S_{201},T_{0,1\oplus 2},T_{1\oplus 2,0}\}.$$

\item[(iv)] If $Q=3\mathcal{D}$, then 
$$\mathcal{E}_\Lambda=\{S_0, S_1, S_2,S_{01},S_{10},S_{02},S_{20},
S_{102},S_{201},S_{0102},S_{2010},T_{0,1\oplus 2},T_{1\oplus 2,0}\}.$$

\item[(v)] If $Q=3\mathcal{H}$, then 
$$\mathcal{E}_\Lambda=\{S_0, S_1, S_2,S_{01},S_{10},S_{20},S_{12},S_{21},
S_{012},T_{1\oplus 2,0},T_{1,0\oplus 2},T_{0\oplus 2,1},T_{2,0\oplus 1}\}.$$

\item[(vi)] If $Q=3\mathcal{K}$, then 
$$\mathcal{E}_\Lambda=\{S_0, S_1, S_2,S_{01},S_{10},S_{02},S_{20},S_{12},S_{21},
T_{0,1\oplus 2},T_{1\oplus 2,0},T_{1,0\oplus 2},T_{0\oplus 2,1},T_{2,0\oplus 1},T_{0\oplus 1,2}\}.$$
\end{enumerate}
\end{prop}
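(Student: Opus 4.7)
The plan is to transfer the problem to the basic algebra $\Lambda=kQ/I$ via the Morita equivalence between $B$ and $\Lambda$, which preserves both indecomposability and endomorphism rings, and then to exploit the explicit quiver and relations from Erdmann's classification collected in Figure~\ref{fig:list} (with the precise Loewy pictures of the projective indecomposables $\Lambda e_j$ available in \cite[Sect.~4]{brauerpaper}).

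The guiding observation is elementary. Because $k$ is algebraically closed and $M$ is finite-dimensional, $\mathrm{End}_\Lambda(M)\cong k$ is equivalent to $M$ being indecomposable and carrying no non-zero nilpotent endomorphism. If some simple $\Lambda$-module $S$ occurs as a summand of both $\mathrm{top}(M)$ and $\mathrm{soc}(M)$, then the composition
\[
M\twoheadrightarrow \mathrm{top}(M)\twoheadrightarrow S\hookrightarrow\mathrm{soc}(M)\hookrightarrow M
\]
is a non-zero nilpotent endomorphism whenever $M$ is not simple. Thus, apart from the simple modules themselves, every $M\in\mathcal{E}_\Lambda$ must have $\mathrm{top}(M)$ and $\mathrm{soc}(M)$ with no common composition factor.

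First I would enumerate all indecomposable $M$ satisfying this top/socle disjointness by reading off the Loewy structure of the $\Lambda e_j$ from \cite[Sect.~4]{brauerpaper}. Every $M$ is a quotient of a direct sum of projectives, and the disjointness constraint, combined with the relations in $I$, severely restricts the admissible radical series. I would split into cases by the number of direct summands of $\mathrm{top}(M)$: when $\mathrm{top}(M)$ is simple, $M$ is a uniserial quotient of a single $\Lambda e_j$, producing candidates $S_v$, $S_{uv}$, $S_{uvw}$, and in the few cases permitted by the relations, $S_{uvwx}$; when $\mathrm{top}(M)$ has two summands, the relations typically force $M$ to be a tree-shaped module $T_{u,v\oplus w}$ or its dual $T_{v\oplus w,u}$; three-summand tops are excluded by the biserial presentations. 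At this point I would read off the precise list for each $Q\in\{2\mathcal{A},2\mathcal{B},3\mathcal{A},\ldots,3\mathcal{K}\}$, taking care to drop $S_{001}$ and $S_{100}$ in the exceptional algebras $\SD(2\mathcal{B})_4(c)$ and $\Q(2\mathcal{B})_2(p,a,c)$ where the relevant paths vanish, to excise the length-$4$ uniserial $S_{0102}$ in $\SD(3\mathcal{B})_1$, and to account for the trimmed shapes in $3\mathcal{C}$, $3\mathcal{D}$ and $3\mathcal{H}$.

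Finally I would verify that each surviving candidate truly has endomorphism ring $k$. For a uniserial $M$ with distinct top and socle, a non-zero endomorphism must be surjective on the top and injective on the socle, hence an isomorphism, and a length argument forces it to be a scalar. For the tree-shaped $T_{u,v\oplus w}$ and $T_{v\oplus w,u}$, an endomorphism is determined by its value on the unique simple socle (respectively top), giving again exactly $k$. The main obstacle will be the case-by-case bookkeeping across the $24$ Morita classes: ruling out longer modules requires knowing which paths are zero in each presentation, and the mildly different behaviour of the exceptional families $\SD(2\mathcal{B})_4(c)$, $\Q(2\mathcal{B})_2(p,a,c)$, $\SD(3\mathcal{B})_1$, $3\mathcal{C}$, $3\mathcal{D}$ and $3\mathcal{H}$ is easy to misstate. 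The conceptual content, however, is entirely captured by the top/socle disjointness criterion combined with the explicit Loewy structures recorded in \cite{brauerpaper}.
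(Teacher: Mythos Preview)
Your starting observation---that a non-simple $M$ with $\mathrm{End}_\Lambda(M)\cong k$ must have $\mathrm{top}(M)$ and $\mathrm{soc}(M)$ sharing no composition factor---is exactly the paper's entry point. But two of your subsequent steps do not hold as stated.

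First, the claim ``when $\mathrm{top}(M)$ is simple, $M$ is a uniserial quotient of a single $\Lambda e_j$'' is false. A module with simple top is indeed a quotient of one indecomposable projective, but the projectives here are biserial, not uniserial, so a cyclic quotient can be far from uniserial. You also implicitly assume the top is multiplicity-free; the paper does not, and works with $\mathrm{top}(M)\cong (S_i)^r$, $\mathrm{soc}(M)\cong (S_j)^s$ for arbitrary $r,s$. The actual content of the proof is precisely to rule out the non-uniserial and higher-multiplicity candidates. The paper does this by first establishing auxiliary path-annihilation statements (for instance, proving that $(\beta\gamma)x=0$ for every $x\in M$ in the $\SD(2\mathcal{A})_1(c)$ case, which takes a full page of argument tracking submodules of $P_0$ and $P_1$), and then pushing through a layer-by-layer $\mathrm{Ext}^1$ analysis of $M/\mathrm{rad}^j(M)$ and $\mathrm{soc}_j(M)$ to force $M$ down to one of the listed shapes. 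This is where the real work lies, and your proposal treats it as bookkeeping.

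Second, your verification step is incorrect as written. For a uniserial $M$ with distinct simple top and socle, it is \emph{not} true that every non-zero endomorphism is surjective on the top: a non-zero $\phi$ may land entirely in $\mathrm{rad}(M)$. For example, a uniserial module with composition series $S_0,S_1,S_0,S_1$ has distinct top and socle but admits the nilpotent endomorphism factoring through $S_{01}$. What actually makes the listed modules work is that no proper non-zero quotient of $M$ embeds as a submodule of $M$, and this depends on the specific composition pattern, not merely on $\mathrm{top}\neq\mathrm{soc}$. Similarly, for the tree modules $T_{u,v\oplus w}$ you need to check there is no map factoring through a length-two subquotient, not just that the endomorphism is determined on the socle.
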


\begin{proof}
Proposition \ref{prop:endok}
is proved using the description of the basic algebras $\Lambda$ in Figure \ref{fig:list},
as provided in \cite[Sect. 4]{brauerpaper}. 
We illustrate the main arguments of the proof by considering the cases when $\Lambda$ is equal
to either $\SD(2\mathcal{A})_1(c)$ or $\Q(3\mathcal{B})$.

\begin{enumerate}
\item[(a)]
Suppose first that $\Lambda=\SD(2\mathcal{A})_1(c)=k[2\mathcal{A}]/I_{\SD(2\mathcal{A})_1,c}$
for some $c\in k$, where
the quiver $2\mathcal{A}$ and the ideal $I_{\SD(2\mathcal{A})_1,c}$ are as in Figure
\ref{fig:SD2A}. Note that $n\ge 4$.

\begin{figure}[ht] 
\caption{\label{fig:SD2A} The quiver and relations 
for $\Lambda=\SD(2\mathcal{A})_1(c)=k[2\mathcal{A}]/I_{\SD(2\mathcal{A})_1,c}$.}
\begin{eqnarray*}
\raisebox{-2ex}{$2\mathcal{A}$}&\raisebox{-2ex}{=}&\xymatrix @R=-.2pc {
0&1\\
\ar@(ul,dl)_{\alpha} \bullet \ar@<.8ex>[r]^{\beta} &\bullet\ar@<.9ex>[l]^{\gamma}}
\\[1ex]
I_{\SD(2\mathcal{A})_1,c}&=&\!\!\!\langle \alpha^2-c(\gamma\beta\alpha)^{2^{n-2}},
\beta\gamma\beta-\beta\alpha(\gamma\beta\alpha)^{2^{n-2}-1},\\
&& \gamma\beta\gamma-\alpha\gamma(\beta\alpha\gamma)^{2^{n-2}-1},
  \alpha(\gamma\beta\alpha)^{2^{n-2}}\rangle .
\end{eqnarray*}
\end{figure}

Let $e_0$ and $e_1$ denote the images of the primitive idempotents of $k[2\mathcal{A}]$ 
corresponding to the vertices $0$ and $1$, respectively. Let
$S_0$ and $S_1$ denote representatives of the isomorphism classes of simple 
$\Lambda$-modules. The projective indecomposable $\Lambda$-modules are pictured 
in Figure \ref{fig:projSD2A}, where we use the short-hand $0,1$ to denote $S_0,S_1$, respectively.
\begin{figure}[ht] 
\caption{\label{fig:projSD2A} The projective indecomposable modules
for $\Lambda=\SD(2\mathcal{A})_1(c)$.}
$$P_0=\vcenter{\xymatrix @R=.1pc @C=.2pc {&0&\\
1&&0\ar@{.}[ldddddddd]\\0\ar@{-}[rrdddddd]&&1\\0&&0\\ :&&:\\:&&:\\0&&0\\1&&0\\
0&&1\\&0&}},
\qquad P_1=\vcenter{\xymatrix @R=.1pc @C=.2pc {&1&\\&0\ar@{-}[lddd]&\\
&&0\\&&1\\1\ar@{-}[rdddd]&&:\\&&:\\&&1\\&&0\\&0&\\&1&}}$$
\vspace{1ex}
\end{figure}

Suppose $M$ is a non-simple $\Lambda$-module such that $\mathrm{End}_\Lambda(M)\cong k$.
Then $M/\mathrm{rad}(M)$ and $\mathrm{soc}(M)$ do not have any composition factors in common.
We first prove the following auxiliary statement:
\begin{equation}
\label{eq:needthis1}
\mbox{If $x\in M$, then $(\beta\gamma)\, x= 0$.}
\end{equation}
Suppose, by contradiction, that there exists $x\in M$ 
such that $(\beta\gamma)\, x\neq 0$. Since $(\beta\gamma)\, x = (\beta\gamma)\, e_1x$, we
replace $x$ by $e_1x$ to be able to assume that $e_1 x = x$. 
Suppose first  $(\gamma\beta\gamma)\, x\neq 0$. Then it follows from the relations in 
$\Lambda$ from Figure \ref{fig:SD2A} that
$\left(\alpha\gamma(\beta\alpha\gamma)^{2^{n-2}-1}\right) x$ is also not zero in $M$. This implies that 
$\Lambda x$, which is a submodule of $M$, is isomorphic to $P_1/\mathrm{soc}(P_1)$ or to $P_1$.
Since $M$ cannot be isomorphic to $P_1$, we obtain that $\Lambda x \cong P_1/\mathrm{soc}(P_1)$.
Therefore, using the notation from Definition \ref{def:small}, $S_{10}$ is isomorphic to a submodule 
of $M$.
On the other hand, $\Lambda x$ is isomorphic to a submodule of $P_0$ and, since $M$ is not
isomorphic to $P_0$, $\Lambda x$ is isomorphic to a submodule of $\mathrm{rad}(P_0)$. This implies 
that $x\in M-\mathrm{rad}(M)$ and that $S_{10}$ is also isomorphic to a quotient module of $M$. 
But this means that $M$ has a non-zero endomorphism factoring 
through $S_{10}$, contradicting $\mathrm{End}_\Lambda(M)\cong k$. Therefore, we must have
$(\gamma\beta\gamma)\, x =  0$. This implies that $(\beta\gamma)\,x$ lies in the socle of $M$, which 
means that $S_1$ is a direct summand of $\mathrm{soc}(M)$. In particular, it follows that 
$x\in\mathrm{rad}(M)$, since otherwise $M$ has a non-zero endomorphism factoring 
through $S_1$. Since $\mathrm{Ext}^1_\Lambda(S_i, S_1)=0$ unless $i=0$, this means there exists
$w\in M$ with $e_0 w= w$ such that $\beta\, w = x$ modulo $\mathrm {rad}^2(\Lambda w)$. Using the
relations in $\Lambda$, we see that this implies  $(\beta\gamma\beta)\,w = (\beta\gamma)\,x\neq 0$.
Using again the relations in $\Lambda$, we obtain that 
$\left(\beta\alpha(\gamma\beta\alpha)^{2^{n-2}-1}\right) w$ is also not zero in $M$. Therefore 
$\Lambda w$, which is a submodule of $M$, surjects onto a quotient module of $P_0$ of the form
$$\vcenter{\xymatrix @R=.1pc @C=.2pc {&0\ar@{-}[lddd]&\\
&&0\\&&1\\1\ar@{-}[rdddd]&&:\\&&:\\&&1\\&&0\\&0&\\&1&}}\quad\cong\quad
\mathrm{rad}(P_1)$$
Since $M$ cannot be isomorphic to $P_1$, it follows that $w\in M-\mathrm{rad}(M)$. 
In particular, this implies that $S_{01}$ is a quotient module of $M$.
Note that $\Lambda w$ is isomorphic to a quotient module of $P_0/\mathrm{soc}(P_0)$. 
Considering all the possible quotient modules of $P_0/\mathrm{soc}(P_0)$ that surject onto 
$\mathrm{rad}(P_1)$, we see that $S_{01}$ is isomorphic to a submodule of each of them.
But this means that $M$ has a non-zero endomorphism factoring 
through $S_{01}$, contradicting $\mathrm{End}_\Lambda(M)\cong k$. 
This completes the proof of (\ref{eq:needthis1}).

Note that (\ref{eq:needthis1}) implies that the uniserial module $S_{101}$
is not isomorphic to either a submodule or
a quotient module of $M$.

Since $M/\mathrm{rad}(M)$ and $\mathrm{soc}(M)$ do not have any composition factors in common,
there are two cases:
either $M/\mathrm{rad}(M)\cong (S_1)^r$ and $\mathrm{soc}(M)\cong (S_0)^s$, or
$M/\mathrm{rad}(M)\cong (S_0)^s$ and $\mathrm{soc}(M)\cong (S_1)^r$, for certain 
$r,s\in\mathbb{Z}^+$. 

We consider the case when $M/\mathrm{rad}(M)\cong (S_1)^r$ and 
$\mathrm{soc}(M)\cong (S_0)^s$, the other case being similar. We claim that, using the notation
from Definition \ref{def:small}, $M$ is isomorphic either to $S_{10}$ or to $S_{100}$.

To prove this claim, we use that $\mathrm{Ext}^1_\Lambda(S_i,S_j)$ is one-dimensional unless
$(i,j)=(1,1)$, in which case it is zero. This implies that 
\begin{eqnarray}
\label{M:top}
M/\mathrm{rad}^2(M) &\cong& \left(\begin{array}{c}S_1\\S_0\end{array}\right)^{r_1} \oplus (S_1)^{r_2},\\
\label{M:soc1}
\mathrm{soc}_2(M) &\cong& \left(\begin{array}{c}S_1\\S_0\end{array}\right)^{s_1} \oplus
\left(\begin{array}{c}S_0\\S_0\end{array}\right)^{s_2} \oplus 
\left(\begin{array}{cc}S_1&S_0\\\multicolumn{2}{c}{S_0}\end{array}\right)^{s_3} \oplus (S_0)^{s_4}
\end{eqnarray}
for certain non-negative $r_i, s_j$, where $r_1$ and at least one of $s_1,s_2,s_3$ must be positive.
Considering (\ref{M:top}) and (\ref{M:soc1}), we see that
the $k$-dimension of $\mathrm{End}_\Lambda(M)$ is at least 2 unless
$M\cong S_{10}$ or 
\begin{equation}
\label{M:soc2}
\mathrm{soc}_2(M) \cong \left(\begin{array}{c}S_0\\S_0\end{array}\right)^{s_2} \oplus (S_0)^{s_4}
\end{equation}
where $s_2>0$.
Hence we only need to consider the case when $M$ satisfies both $(\ref{M:top})$ and
$(\ref{M:soc2})$. Since $\mathrm{Ext}^1(S_i,\begin{array}{c}S_0\\S_0\end{array})$ is one-dimensional
when $i=1$ and zero when $i=0$, it follows that
\begin{equation}
\label{M:soc3}
\mathrm{soc}_3(M) \cong \left(\begin{array}{c}S_1\\S_0\\S_0\end{array}\right)^{s_5} \oplus
\left(\begin{array}{c}S_0\\S_0\end{array}\right)^{s_6} \oplus (S_0)^{s_7}
\end{equation}
where $s_5>0$.
Since $\mathrm{Ext}^1(\begin{array}{c}S_1\\S_0\end{array},S_j)$ is one-dimensional
for both $j=0$ and $j=1$, it follows that the possible direct summands of $M/\mathrm{rad}^3(M)$
are isomorphic to
$$\begin{array}{c}S_1\\S_0\\S_0\end{array},\quad
\begin{array}{c@{}cc}S_1\\&S_0&S_1\\&\multicolumn{2}{c}{S_0}\end{array}, \quad
\begin{array}{c}S_1\\S_0\\S_1\end{array},\quad
\begin{array}{cc}\multicolumn{2}{c}{S_1}\\\multicolumn{2}{c}{S_0}\\S_1&S_0\end{array},\quad
\begin{array}{c@{}cc}&S_1\\&S_0&S_1\\S_1&\multicolumn{2}{c}{S_0}\end{array},\quad
\begin{array}{c}S_1\\S_0\end{array},\quad S_1$$
where at least one summand of radical length 3 occurs. 
Since by (\ref{eq:needthis1}) $M$ does not surject onto $S_{101}$, we obtain
$$M/\mathrm{rad}^3(M) \cong \left(\begin{array}{c}S_1\\S_0\\S_0\end{array}\right)^{r_3} \oplus
\left(\begin{array}{c@{}cc}S_1\\&S_0&S_1\\&\multicolumn{2}{c}{S_0}\end{array}\right)^{r_4} \oplus
\left(\begin{array}{c}S_1\\S_0\end{array}\right)^{r_5} \oplus (S_1)^{r_6}$$
where either $r_3>0$ or $r_4>0$. If $r_3>0$, then either $M\cong S_{100}$
or the endomorphism ring of $M$ has $k$-dimension at
least 2. Hence we only need to consider the case when 
\begin{equation}
\label{M:rad5}
M/\mathrm{rad}^3(M) \cong 
\left(\begin{array}{c@{}cc}S_1\\&S_0&S_1\\&\multicolumn{2}{c}{S_0}\end{array}\right)^{r_4} \oplus
\left(\begin{array}{c}S_1\\S_0\end{array}\right)^{r_5} \oplus (S_1)^{r_6}
\end{equation}
and $r_4>0$. Using additional $\mathrm{Ext}^1$ arguments, we see that then
$\begin{array}{c@{}cc}S_1\\&S_0&S_1\\&\multicolumn{2}{c}{S_0}\\&\multicolumn{2}{c}{S_1}\end{array}$
has to be a direct summand of $M/\mathrm{rad}^4(M)$. But this implies that there 
exists an element $x\in M$ with $(\beta\gamma)\, x\neq 0$, which contradicts (\ref{eq:needthis1}). 
Summarizing, if $M/\mathrm{rad}(M)\cong (S_1)^r$ and $\mathrm{soc}(M)\cong (S_0)^s$, then 
$M$ is isomorphic either to $S_{10}$ or to $S_{100}$.
This completes the case when $B$ is Morita equivalent to $\SD(2\mathcal{A})_1(c)$.

\medskip
 
\item[(b)]
Suppose next that $B$ is Morita equivalent to 
$\Lambda=\Q(3\mathcal{B})=k[3\mathcal{B}]/I_{\Q(3\mathcal{B})}$ where
the quiver $3\mathcal{B}$ and the ideal $I_{\Q(3\mathcal{B})}$ are as in Figure
\ref{fig:Q3B}. Note that $n\ge 4$.

\begin{figure}[ht] 
\caption{\label{fig:Q3B} The quiver and relations 
for $\Lambda=\Q(3\mathcal{B})=k[3\mathcal{B}]/I_{\Q(3\mathcal{B})}$.}
\begin{eqnarray*}
\raisebox{-2ex}{$3\mathcal{B}$}&\raisebox{-2ex}{=}&
\xymatrix @R=-.2pc {
1&0&\\
 \ar@(ul,dl)_{\alpha} \bullet \ar@<.8ex>[r]^{\beta} \ar@<.9ex>[r];[]^{\gamma}
& \bullet \ar@<.8ex>[r]^(.46){\delta} \ar@<.9ex>[r];[]^(.54){\eta} & \bullet\;2}
\\[1ex]
I_{\Q(3\mathcal{B})}&=&\!\!\!\langle \gamma\beta-\alpha^{2^{n-2}-1},
\alpha\gamma-\gamma\eta\delta(\beta\gamma\eta\delta), 
\beta\alpha-\eta\delta\beta(\gamma\eta\delta\beta),\\
&&\delta\eta\delta-\delta\beta\gamma(\eta\delta\beta\gamma),
\eta\delta\eta-\beta\gamma\eta(\delta\beta\gamma\eta), \beta\alpha^2,\delta\eta\delta\beta\rangle.
\end{eqnarray*}
\end{figure}

Let $e_0$, $e_1$ and $e_2$ denote the images of the primitive idempotents of $k[3\mathcal{B}]$ 
corresponding to the vertices $0$, $1$ and $2$, respectively. 
Let $S_0$, $S_1$ and $S_2$ denote representatives of the isomorphism classes of simple 
$\Lambda$-modules. The projective indecomposable $\Lambda$-modules are pictured 
in Figure \ref{fig:projQ3B}, where we use the short-hand $0,1,2$ to denote $S_0,S_1,S_2$, respectively.
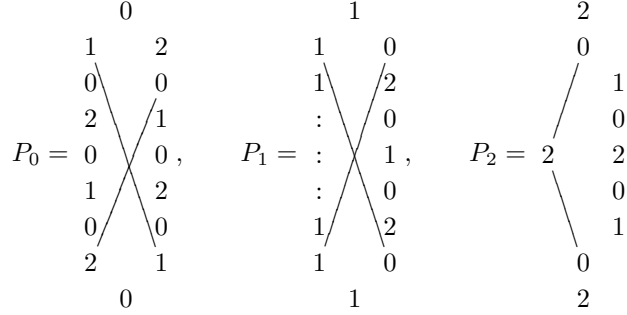
\begin{figure}[ht] 
\caption{\label{fig:projQ3B} The projective indecomposable modules
for $\Lambda=\Q(3\mathcal{B})$.}
$$P_0=\vcenter{\xymatrix @R=.1pc @C=.2pc {&0&\\
1\ar@{-}[rrdddddd]&&2\\0&&0\ar@{-}[llddddd]\\2&&1\\ 0&&0\\1&&2\\0&&0\\
2&&1\\&0&}},
\qquad P_1=\vcenter{\xymatrix @R=.1pc @C=.2pc {&1&\\1\ar@{-}[rrdddddd]&&0\ar@{-}[lldddddd]\\ 
1&&2\\:&&0\\ :&&1\\ :&&0\\ 1&&2\\1&&0\\&1&}},
\qquad P_2=\vcenter{\xymatrix @R=.1pc @C=.2pc {&2&\\&0\ar@{-}[lddd]&\\
&&1\\&&0\\2\ar@{-}[rddd]&&2\\&&0\\&&1\\&0&\\&2&}}$$
\vspace{1ex}
\end{figure}

Suppose $M$ is a non-simple $\Lambda$-module such that $\mathrm{End}_\Lambda(M)\cong k$.
Then $M/\mathrm{rad}(M)$ and $\mathrm{soc}(M)$ do not have any composition factors in common.
We first prove the following two auxiliary statements:
\begin{equation}
\label{eq:needthis2B}
\mbox{If $x\in M$, then $(\delta\eta)\, x= 0$.}
\end{equation}
\begin{equation}
\label{eq:needthis2A}
\mbox{If $y\in M-\mathrm{rad}(M)$, then $\alpha\, y= 0$.}
\end{equation}
The statement (\ref{eq:needthis2B}) is proved similarly to the statement (\ref{eq:needthis1}).
To prove (\ref{eq:needthis2A}), suppose, by contradiction, that there exists $y\in M-\mathrm{rad}(M)$ 
such that $\alpha\, y\neq 0$. Since $\alpha\, y = \alpha\, e_1y$, we
replace $y$ by $e_1y$ to be able to assume that $e_1 y = y$. This implies in particular that
$S_1$ is a direct summand of $M/\mathrm{rad}(M)$.
If $\alpha^2\,y\neq 0$, then there exists an integer $a\ge 2$ with
$\alpha^a\, y\neq 0$ and $\alpha^{a+1}\,y=0$. This means that $\alpha^a\, y$ lies in the socle of
$M$, implying that $S_1$ is a direct summand of $\mathrm{soc}(M)$, contradicting
$\mathrm{End}_\Lambda(M)\cong k$. Hence
$\alpha^2\,y=0$. If $(\beta\alpha)\,y=0$, then $\alpha\, y$ lies in the socle of $M$, again implying
that $S_1$ is a direct summand of $\mathrm{soc}(M)$. Therefore, $\alpha^2\,y=0$ and $(\beta\alpha)\,y
\neq 0$. But then it follows from the relations in $\Lambda$ from Figure \ref{fig:Q3B} that
$\left(\eta\delta\beta(\gamma\eta\delta\beta)\right) y$ is also not zero in $M$. Since $\alpha^2\,y=0$,
this implies that $\Lambda y$, which is a submodule of $M$, is isomorphic to a quotient module
of $P_1$ of the form
$$\xymatrix @R=.1pc @C=.2pc {&1&\\1\ar@{-}[rrdddddd]&&0\\ 
&&2\\&&0\\ &&1\\ &&0\\ &&2\\&&0}$$
Therefore, using the notation from Definition \ref{def:small}, $S_{10}$ 
is isomorphic to a submodule of $M$.
On the other hand, $\mathrm{Ext}^1_\Lambda(S_i, \Lambda y)=0$ unless $i=0$, 
which implies that $S_{10}$ is also a 
quotient module of $M$. But this means that $M$ has a non-zero endomorphism factoring 
through $S_{10}$, contradicting $\mathrm{End}_\Lambda(M)\cong k$. 
This proves (\ref{eq:needthis2A}).

Note that (\ref{eq:needthis2A}) implies that $S_{11}$
is not isomorphic to a submodule of $M$ and (\ref{eq:needthis2B}) implies that
$S_{202}$ is not isomorphic to either a submodule or
a quotient module of $M$.

Depending on which of $S_0,S_1,S_2$ are direct summands of $M/\mathrm{rad}(M)$ and 
$\mathrm{soc}(M)$, we obtain different possibilities for $M$. There are altogether
twelve different possibilities for $M/\mathrm{rad}(M)$ and $\mathrm{soc}(M)$.
To illustrate our arguments, we now consider two of these cases:
\begin{equation}
\label{eq:caseb1}
\mbox{$M/\mathrm{rad}(M)\cong (S_1)^r$ and $\mathrm{soc}(M)\cong (S_0)^s$ 
for certain  $r,s\in\mathbb{Z}^+$, and}
\end{equation}
\begin{equation}
\label{eq:caseb5}
\mbox{$M/\mathrm{rad}(M)\cong (S_1)^r\oplus (S_2)^s$ and
$\mathrm{soc}(M)\cong (S_0)^t$ for certain  $r,s,t\in\mathbb{Z}^+$.}
\end{equation}

Suppose first that $M$ satisfies (\ref{eq:caseb1}).
By (\ref{eq:needthis2A}), $M$ does not surject onto $S_{11}$, which implies that 
\begin{eqnarray}
\label{1M:top}
M/\mathrm{rad}^2(M) &\cong& \left(\begin{array}{c}S_1\\S_0\end{array}\right)^{r_1} \oplus (S_1)^{r_2},\\
\label{1M:soc1}
\mathrm{soc}_2(M) &\cong& \left(\begin{array}{c}S_1\\S_0\end{array}\right)^{s_1} \oplus
\left(\begin{array}{c}S_2\\S_0\end{array}\right)^{s_2} \oplus 
\left(\begin{array}{cc}S_1&S_2\\\multicolumn{2}{c}{S_0}\end{array}\right)^{s_3} \oplus (S_0)^{s_4}
\end{eqnarray}
for certain non-negative $r_i, s_j$, where $r_1$ and at least one of $s_1,s_2,s_3$ must be positive.
This implies that the $k$-dimension of $\mathrm{End}_\Lambda(M)$ is at least 2 unless
$M\cong S_{10}$ or 
\begin{equation}
\label{1M:soc2}
\mathrm{soc}_2(M) \cong \left(\begin{array}{c}S_2\\S_0\end{array}\right)^{s_2} \oplus (S_0)^{s_4}
\end{equation}
where $s_2>0$.
Hence we can concentrate on the case when $M$ satisfies both $(\ref{1M:top})$ and
$(\ref{1M:soc2})$. In this case, we have
\begin{eqnarray}
\label{1M:rad3}
M/\mathrm{rad}^3(M) &\cong& \left(\begin{array}{c}S_1\\S_0\\S_2\end{array}\right)^{r_3} \oplus
\left(\begin{array}{c}S_1\\S_0\end{array}\right)^{r_4} \oplus (S_1)^{r_5},\\
\label{1M:soc3}
\mathrm{soc}_3(M) &\cong& \left(\begin{array}{c}S_0\\S_2\\S_0\end{array}\right)^{s_5} \oplus
\left(\begin{array}{c}S_2\\S_0\end{array}\right)^{s_6} \oplus (S_0)^{s_7}
\end{eqnarray}
where $r_3,s_5>0$. By (\ref{eq:needthis2A}), this then implies that 
\begin{eqnarray}
\label{1M:rad4}
M/\mathrm{rad}^4(M) &\cong& \left(\begin{array}{c}S_1\\S_0\\S_2\\S_0\end{array}\right)^{r_6} \oplus
\left(\begin{array}{c@{}c@{}cc}S_1\\&S_0\\&&S_2&S_1\\&&\multicolumn{2}{c}{S_0}\end{array}
\right)^{r_7}\oplus \mbox{(modules of length $\le 3$)},\\
\label{1M:soc4}
\mathrm{soc}_4(M) &\cong& \left(\begin{array}{c}S_1\\S_0\\S_2\\S_0\end{array}\right)^{s_8} \oplus
\mbox{(modules of length $\le 3$)}
\end{eqnarray}
where $s_8$ and at least one of $r_6, r_7$ is positive. 
Using additional $\mathrm{Ext}^1$ arguments, we see
that $M$ cannot have a quotient module  that has radical length 5 and that surjects onto
$X=\begin{array}{c@{}c@{}cc}S_1\\&S_0\\&&S_2&S_1\\&&\multicolumn{2}{c}{S_0}\end{array}$. 
Since the endomorphism ring of $X$ has $k$-dimension 2, this implies that $r_6$ must
be positive. Therefore, it follows that $M\cong S_{1020}$, since
$M$ always has a non-zero endomorphism factoring through this module. Summarizing, if
$M$ satisfies (\ref{eq:caseb1}), then
$M$ is isomorphic either to $S_{10}$ or to $S_{1020}$.

Next suppose that $M$ satisfies (\ref{eq:caseb5}).
Since $M$ does not surject onto $S_{11}$ by (\ref{eq:needthis2A}), we obtain
\begin{eqnarray}
\label{5M:top}
M/\mathrm{rad}^2(M) &\cong& \left(\begin{array}{c}S_1\\S_0\end{array}\right)^{r_1} \oplus (S_1)^{r_2}\
\oplus \left(\begin{array}{c}S_2\\S_0\end{array}\right)^{s_1} \oplus (S_2)^{s_2}
\oplus \left(\begin{array}{cc}S_1&S_2\\\multicolumn{2}{c}{S_0}\end{array}\right)^u,\\
\label{5M:soc1}
\mathrm{soc}_2(M) &\cong& \left(\begin{array}{c}S_1\\S_0\end{array}\right)^{t_1} \oplus
\left(\begin{array}{c}S_2\\S_0\end{array}\right)^{t_2} \oplus 
\left(\begin{array}{cc}S_1&S_2\\\multicolumn{2}{c}{S_0}\end{array}\right)^{t_3} \oplus (S_0)^{t_4}
\end{eqnarray}
for certain non-negative $r_i, s_j,t_k,u$, where at least one of $r_1,r_2,u$
and at least one of $s_1,s_2,u$  and at least one of $r_1,s_1,u$ and at least one of
$t_1,t_2,t_3$ must be positive. If $t_3$ is positive, then either 
$M\cong T_{1\oplus 2,0}\cong \begin{array}{cc}S_1&S_2\\\multicolumn{2}{c}{S_0}\end{array}$ or the
$k$-dimension of $\mathrm{End}_\Lambda(M)$ is at least 2.
Hence we can concentrate on the case when $t_3=0$. In particular, $M$ has radical length at least 3.
By (\ref{eq:needthis2A}) and (\ref{eq:needthis2B}), it follows that
\begin{equation}
\label{5M:rad3}
M/\mathrm{rad}^3(M)\cong
\left(\begin{array}{c}S_1\\S_0\\S_2\end{array}\right)^{r_3} \oplus
\left(\begin{array}{c}S_2\\S_0\\S_1\end{array}\right)^{s_3} \oplus 
\mbox{(modules of radical length $\le 2$)}
\end{equation}
where at least one of $r_3,s_3$ is positive.
Therefore, we see that the $k$-dimension of $\mathrm{End}_\Lambda(M)$ is at least 2 unless
either $s_3=0=t_1=t_3$ or $r_3=0=t_2=t_3$. 
In the first of these two cases we can argue similarly as in the case when $M$ satisfies (\ref{eq:caseb1})
to see that $M$ has a non-zero endomorphism factoring through 
$S_{1020}$.
In the second case, additional $\mathrm{Ext}^1$ arguments show that $M$ has a non-zero
endomorphism factoring through $S_{2010}$.
Summarizing, if $M$ satisfies (\ref{eq:caseb5}), then
$M\cong T_{1\oplus 2,0}$.

This concludes the proof of the two cases when $M$ satisfies either (\ref{eq:caseb1}) or
(\ref{eq:caseb5}). Hence this completes the case when $B$ is Morita equivalent to $\Q(3\mathcal{B})$.
\end{enumerate}
\end{proof}

%%%%%%%%%%%%%%%%%%%%%%%%%%%%%%%%%%%%%%%%%%%%%%%%%%%%%%%%%
%% Universal deformation rings
%%%%%%%%%%%%%%%%%%%%%%%%%%%%%%%%%%%%%%%%%%%%%%%%%%%%%%%%%

\section{Universal deformation rings}
\label{s:udr}

We assume Hypothesis \ref{hyp:alltheway}, and that $B$ is \textbf{non-local}. 
In this section, we determine 
the universal deformation ring of every $kG$-module $V$ belonging to $B$ whose 
endomorphism ring is isomorphic to $k$. In particular, this together with Lemma \ref{lem:local}
proves Theorem \ref{thm:bigmain}.
We use the lists $\mathcal{E}$ and $\mathcal{E}_\Lambda$ obtained in Proposition \ref{prop:endok}. 

We need to subdivide these lists according to different criteria. One criterion is whether
$\mathrm{Ext}^1$ is zero or not for the modules in these lists. By \cite[Sect. 6]{brauerpaper},
it is also important to separate out the modules $V\in\mathcal{E}$ whose $2$-modular character 
is equal to the restriction to the $2$-regular conjugacy classes of an ordinary irreducible character 
of $G$ of height $1$ belonging to $B$. Moreover, \cite[Prop. 6.5]{brauerpaper} also shows that 
modules that lie at the end of 3-tubes of the stable Auslander-Reiten quiver of $B$ play a special role 
when determining their universal deformation rings.

\begin{dfn}
\label{def:organizelist}
Assume Hypothesis $\ref{hyp:alltheway}$, and that $B$ is non-local.
Let $\Lambda=kQ/I$ be a basic algebra such that
$B$ is Morita equivalent to $\Lambda$, where we assume $\Lambda$ is one of the algebras
in Figure $\ref{fig:list}$. Let $\mathcal{E}$ and $\mathcal{E}_\Lambda$ be as in Proposition 
\ref{prop:endok}. 
Define the following $4$ sublists of $\mathcal{E}$:
\begin{enumerate}
\item the sublist $\mathcal{E}_1$ of $\mathcal{E}$ consisting of those modules $V$
	such that $\mathrm{Ext}^1_{kG}(V,V)\neq 0$ and the $2$-modular character of $V$
	is equal to the restriction to the $2$-regular conjugacy classes of an ordinary 
	irreducible character of $G$ of height $1$;
\item the sublist $\mathcal{E}_2$ of $\mathcal{E}$ consisting of those modules $V$
	such that $\mathrm{Ext}^1_{kG}(V,V)\neq 0$ and $V$ does not belong to $\mathcal{E}_1$;
\item the sublist $\mathcal{E}_3$ of $\mathcal{E}$ consisting of those modules $V$
	such that $\mathrm{Ext}^1_{kG}(V,V)=0$ and $V$ belongs to a $3$-tube of the stable
	Auslander-Reiten quiver of $B$;
\item the sublist $\mathcal{E}_4$ of $\mathcal{E}$ consisting of those modules $V$
	such that $\mathrm{Ext}^1_{kG}(V,V)=0$ and $V$ does not belong to $\mathcal{E}_3$.
\end{enumerate}
For $i\in\{1,2,3,4\}$, let $\mathcal{E}_{\Lambda,i}$ be the set of $\Lambda$-modules in 
$\mathcal{E}_\Lambda$ that correspond to the modules in $\mathcal{E}_i$ 
under the Morita equivalence between $B$ and $\Lambda$.
\end{dfn}

The following lemma describes the modules in each of these sublists.

\begin{lemma}
\label{lem:organize}
Assume Hypothesis $\ref{hyp:alltheway}$, and that $B$ is non-local. Let
$\Lambda=kQ/I$ be a basic algebra such that
$B$ is Morita equivalent to $\Lambda$, where we assume $\Lambda$ is one of the algebras
in Figure $\ref{fig:list}$.
Let $\mathcal{E}_\Lambda$ and $\mathcal{E}_{\Lambda,1}, \mathcal{E}_{\Lambda,2}, 
\mathcal{E}_{\Lambda,3}, \mathcal{E}_{\Lambda,4}$ be as in Definition 
$\ref{def:organizelist}$.
\begin{enumerate}
\item[(i)] If $Q=2\mathcal{A}$, then $\mathcal{E}_{\Lambda,1}=\{S_{001},S_{100}\}$
	and $\mathcal{E}_{\Lambda,2}=\{S_0,S_{01},S_{10}\}$.
	If $\Lambda\in\{\D(2\mathcal{A}),\SD(2\mathcal{A})_2(c)\}$ then
	$\mathcal{E}_{\Lambda,3}=\{S_1\}$, and
	if $\Lambda\in\{\SD(2\mathcal{A})_1(c),\Q(2\mathcal{A})(c)\}$ then
	$\mathcal{E}_{\Lambda,3}=\emptyset$.
	
	If $Q=2\mathcal{B}$ and $\Lambda\not\in
	\{\SD(2\mathcal{B})_4(c),\Q(2\mathcal{B})_2(p,a,c)\}$, then 
	$\mathcal{E}_{\Lambda,1}=\{S_1\}$ and $\mathcal{E}_{\Lambda,2}=\{S_0,S_{01},S_{10}\}$.
	If $\Lambda\in\{\D(2\mathcal{B}),\SD(2\mathcal{B})_1(c)\}$ then
	$\mathcal{E}_{\Lambda,3}=\{S_{001},S_{100}\}$, and
	if $\Lambda\in\{\SD(2\mathcal{B})_2(c),\Q(2\mathcal{B})_1(c)\}$ then
	$\mathcal{E}_{\Lambda,3}=\emptyset$.
	If $\Lambda\in\{\SD(2\mathcal{B})_4(c),\Q(2\mathcal{B})_2(p,a,c)\}$, then
	$\mathcal{E}_{\Lambda,1}=\{S_{01},S_{10}\}$, $\mathcal{E}_{\Lambda,2}=\{S_0,S_1\}$ and 
	$\mathcal{E}_{\Lambda,3}=\emptyset$.  

\item[(ii)] If $Q=3\mathcal{A}$, then $\mathcal{E}_{\Lambda,1}=\emptyset=\mathcal{E}_{\Lambda,2}$
	in the cases when $n=2$ or when $n=3$ and $D$ is quaternion, and
	$\mathcal{E}_{\Lambda,1}=\{S_{0102},S_{2010},S_{0201},S_{1020}\}$ and 
	$\mathcal{E}_{\Lambda,2}=\emptyset$ in all other cases.
	If $\Lambda=\D(3\mathcal{A})_1$, then 
	$\mathcal{E}_{\Lambda,3}=\{S_1,S_2,S_{0102},S_{2010},S_{0201},S_{1020}\}$ in the case
	when $n=2$, and $\mathcal{E}_{\Lambda,3}=\{S_1,S_2\}$ in the case when $n\ge 3$.
	If $\Lambda=\SD(3\mathcal{A})_1$ then $\mathcal{E}_{\Lambda,3}=\{S_1\}$, and
	if $\Lambda=\Q(3\mathcal{A})_2$ then $\mathcal{E}_{\Lambda,3}=\emptyset$. 
	
	If $Q=3\mathcal{B}$, then $\mathcal{E}_{\Lambda,1}=\{S_1\}$. If $Q=3\mathcal{B}$
	and $\Lambda\neq \SD(3\mathcal{B})_2$, then $\mathcal{E}_{\Lambda,2}=\emptyset$.
	If $\Lambda=\D(3\mathcal{B})_1$ then 
	$\mathcal{E}_{\Lambda,3}=\{S_2,S_{0102},S_{2010},S_{0201},S_{1020}\}$, and
	if $\Lambda=\SD(3\mathcal{B})_1$ then $\mathcal{E}_{\Lambda,3}=\{S_{0201},S_{1020}\}$, and
	if $\Lambda=\Q(3\mathcal{B})$ then $\mathcal{E}_{\Lambda,3}=\emptyset$. 
	If $\Lambda=\SD(3\mathcal{B})_2$, then $\mathcal{E}_{\Lambda,2}=\{S_{0102},S_{2010}\}$
	and $\mathcal{E}_{\Lambda,3}=\{S_2\}$.

\item[(iii)] If $Q=3\mathcal{C}$, then $\mathcal{E}_{\Lambda,3}=\emptyset$.
	If $\Lambda=\SD(3\mathcal{C})_{2,1}$, then $\mathcal{E}_{\Lambda,1}=\{S_0\}$
	and $\mathcal{E}_{\Lambda,2}=\{T_{0,1\oplus 2},T_{1\oplus 2,0}\}$.
	If $\Lambda=\SD(3\mathcal{C})_{2,2}$, then 
	$\mathcal{E}_{\Lambda,1}=\{S_{102},S_{201},T_{0,1\oplus 2},T_{1\oplus 2,0}\}$ and
	$\mathcal{E}_{\Lambda,2}=\{S_0\}$. 

\item[(iv)] If $Q=3\mathcal{D}$, then $\mathcal{E}_{\Lambda,1}=\{S_1\}$,
	$\mathcal{E}_{\Lambda,2}=\{S_2\}$ and $\mathcal{E}_{\Lambda,3}=\{S_{0102},S_{2010}\}$.

\item[(v)] If $Q=3\mathcal{H}$, then $\mathcal{E}_{\Lambda,3}=\{S_{20}\}$.
	If $\Lambda=\SD(3\mathcal{H})_1$, then $\mathcal{E}_{\Lambda,1}=\{S_{12},S_{21}\}$ and
	$\mathcal{E}_{\Lambda,2}=\{S_{01}\}$.
	If $\Lambda=\SD(3\mathcal{H})_2$, then $\mathcal{E}_{\Lambda,1}=\{S_{01},S_{10}\}$ and
	$\mathcal{E}_{\Lambda,2}=\{S_{12}\}$.
	
\item[(vi)] If $Q=3\mathcal{K}$, then $\mathcal{E}_{\Lambda,1}=\emptyset=\mathcal{E}_{\Lambda,2}$
	in the cases when $n=2$ or when $n=3$ and $D$ is quaternion, and
	$\mathcal{E}_{\Lambda,1}=\{S_{12},S_{21}\}$ and
	$\mathcal{E}_{\Lambda,2}=\emptyset$ in all other cases.
	If $\Lambda=\D(3\mathcal{K})$, then 
	$\mathcal{E}_{\Lambda,3}=\{S_{01},S_{10},S_{12},S_{21},S_{02},S_{20}\}$
	in the case when $n=2$, and
	$\mathcal{E}_{\Lambda,3}=\{S_{01},S_{10},S_{02},S_{20}\}$ in the case when $n\ge 3$.
	If $\Lambda=\Q(3\mathcal{K})$ then $\mathcal{E}_{\Lambda,3}=\emptyset$. 
\end{enumerate}
In all cases, $\mathcal{E}_{\Lambda,4}=\mathcal{E}_\Lambda -\left(\mathcal{E}_{\Lambda,1}
\cup \mathcal{E}_{\Lambda,2}\cup \mathcal{E}_{\Lambda,3}\right)$. Moreover,
$\mathrm{dim}_k\,\mathrm{Ext}^1_\Lambda(M,M)\in\{0,1\}$ for all $M\in\mathcal{E}_\Lambda$.
\end{lemma}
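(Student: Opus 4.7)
The proof proceeds by systematic case analysis across the 24 basic algebras of Figure \ref{fig:list} and the modules listed in Proposition \ref{prop:endok}. For each $\Lambda$ and each $M\in\mathcal{E}_\Lambda$, three verifications have to be carried out, corresponding to the three criteria that partition $\mathcal{E}_\Lambda$ into $\mathcal{E}_{\Lambda,1}$, $\mathcal{E}_{\Lambda,2}$, $\mathcal{E}_{\Lambda,3}$, $\mathcal{E}_{\Lambda,4}$.

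First, I would compute $\dim_k\mathrm{Ext}^1_\Lambda(M,M)$ directly. Using the quiver and relations from \cite[Sect. 4]{brauerpaper} and the explicit descriptions of the projective indecomposable modules (as illustrated for $\SD(2\mathcal{A})_1(c)$ and $\Q(3\mathcal{B})$ in the proof of Proposition \ref{prop:endok}), one constructs the first two terms of a minimal projective resolution $P_1\to P_0\to M\to 0$ and computes $\mathrm{Ext}^1_\Lambda(M,M)$ as the cokernel of the induced map $\mathrm{Hom}_\Lambda(P_0,M)\to \mathrm{Hom}_\Lambda(P_1,M)$. Since every $M\in\mathcal{E}_\Lambda$ has at most four composition factors, each such computation reduces to a short piece of finite-dimensional linear algebra. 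This step simultaneously yields the partition $\mathcal{E}_{\Lambda,1}\cup\mathcal{E}_{\Lambda,2}$ versus $\mathcal{E}_{\Lambda,3}\cup\mathcal{E}_{\Lambda,4}$ and verifies the final assertion that $\dim_k\mathrm{Ext}^1_\Lambda(M,M)\le 1$.

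Second, to separate $\mathcal{E}_{\Lambda,1}$ from $\mathcal{E}_{\Lambda,2}$ within the set of modules with non-vanishing self-$\mathrm{Ext}^1$, I would read off the composition factors of each $M\in\mathcal{E}_\Lambda$ directly from the notation of Definition \ref{def:small}; this gives the $2$-modular Brauer character of the corresponding $V\in\mathcal{E}$. Comparing this character with the height $1$ rows of the decomposition matrix of $B$ tabulated in the Appendix of \cite{brauerpaper} then settles the question. For the third criterion, I would use Erdmann's classification of the stable Auslander-Reiten quivers of tame blocks \cite{erd}: a module lies at the end of a $3$-tube precisely when $\Omega^3_\Lambda M\cong M$ in the stable category. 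Iterating the Heller operator on the projective resolutions built in the first step and checking for $3$-periodicity identifies $\mathcal{E}_{\Lambda,3}$.

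The main obstacle is bookkeeping rather than any conceptual difficulty: there are $24$ families, each with up to roughly $15$ modules, so the case list is long. However, large blocks of cases are handled uniformly, e.g.\ $\mathrm{Ext}^1_\Lambda(S_i,S_i)\neq 0$ exactly when the quiver $Q$ has a loop at the vertex $i$, and the $\Omega$-orbits of simples are forced by the shapes of the projective covers visible in Figures \ref{fig:projSD2A} and \ref{fig:projQ3B}. The appearance of $3$-tubes only for dihedral and certain semidihedral cases, and their absence from every quaternion family, is then a direct consequence of the Heller periodicity computations, matching the tabulation in statements (i)--(vi) of the lemma.
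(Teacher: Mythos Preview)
Your overall strategy matches the paper's: compute $\mathrm{Ext}^1_\Lambda(M,M)$ directly from the quiver presentation, compare composition factors against the height-$1$ rows of the decomposition matrices in \cite[Appendix]{brauerpaper} to isolate $\mathcal{E}_{\Lambda,1}$, and inspect the stable Auslander--Reiten quiver for $\mathcal{E}_{\Lambda,3}$. The paper shortens the bookkeeping by citing \cite[Lem.~6.1]{brauerpaper} for $\mathcal{E}_{\Lambda,1}$ and \cite{3sim,2sim} for the dihedral $3$-tubes, carrying out the orbit computation by hand only in the semidihedral case; but this is a difference of presentation, not of method.

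There is, however, one genuine error. You claim that ``a module lies at the end of a $3$-tube precisely when $\Omega^3_\Lambda M\cong M$ in the stable category'' and propose to check $3$-periodicity of the Heller operator. Since $\Lambda$ is symmetric, the Auslander--Reiten translate is $\tau\cong\Omega^2$, so membership in a tube of rank~$3$ is detected by $\tau^3 M\cong M$, i.e.\ $\Omega^6 M\cong M$, together with $\tau M\not\cong M$ and $\tau^2 M\not\cong M$. The $\Omega$-period of a module at the end of a $3$-tube divides $6$ but need not equal $3$: it is $6$ whenever $\Omega$ exchanges two distinct $3$-tubes rather than fixing a single one, and both behaviours occur among the algebras in Figure~\ref{fig:list}. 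This is why the paper tracks the $\Omega^2$-orbit (equivalently the $\tau$-orbit) rather than looking for $\Omega^3$-fixed points. Once you replace your $\Omega^3$-criterion by the correct $\tau^3$-criterion, the rest of your argument goes through.
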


\begin{proof}
Lemma \ref{lem:organize} 
is proved using the description of each basic algebra $\Lambda$ in Figure \ref{fig:list},
as provided in \cite[Sect. 4]{brauerpaper}. 
Using this description, we can readily determine the $k$-dimension of $\mathrm{Ext}^1_\Lambda(M,M)$
for all modules $M\in\mathcal{E}_\Lambda$. In particular, we see that 
$\mathrm{dim}_k\,\mathrm{Ext}^1_\Lambda(M,M)\in\{0,1\}$ for all such $M$.
The modules in $\mathcal{E}_{\Lambda,1}$ have
already been determined in \cite[Lem. 6.1]{brauerpaper}. 
Note that the cases when $n=2$, respectively $n=3$ and $D$ is quaternion, play a special role, since
in these cases $\mathcal{E}_{\Lambda,1}=\emptyset$.
The  modules $M\in\mathcal{E}_\Lambda-\mathcal{E}_{\Lambda,1}$ with 
$\mathrm{Ext}^1_\Lambda(M,M)\neq 0$
then provide $\mathcal{E}_{\Lambda,2}$. If $\Lambda$ is of quaternion type,  the stable 
Auslander-Reiten quiver $\Gamma_s(\Lambda)$ of $\Lambda$ does not contain any 3-tubes, 
which implies that $\mathcal{E}_{\Lambda,3}=\emptyset$. If $\Lambda$ is of dihedral type, then
$\Gamma_s(\Lambda)$ always contains at least one 3-tube and the modules in 
$\mathcal{E}_{\Lambda,3}$ have been determined, for example, in \cite[Sect. 4]{3sim} and 
\cite[Sect. 5]{2sim}.  
If $\Lambda$ is of semidihedral type, we consider the $\Omega^2$ orbit of the $\Lambda$-modules
$M$ in $\mathcal{E}_\Lambda$ with $\mathrm{Ext}^1_\Lambda(M,M)=0$ to determine 
$\mathcal{E}_{\Lambda,3}$. It is obvious that 
$\mathcal{E}_{\Lambda,4}=\mathcal{E}_\Lambda -\left(\mathcal{E}_{\Lambda,1}
\cup \mathcal{E}_{\Lambda,2}\cup \mathcal{E}_{\Lambda,3}\right)$, which
completes the proof of Lemma \ref{lem:organize}.
\end{proof}

Using the sublists of $\mathcal{E}$ from Definition \ref{def:organizelist}, we can now
determine the universal deformation ring for every module $V$ in $\mathcal{E}$.
For the modules $V\in\mathcal{E}_1$, the universal deformation ring depends on whether or
not $V$ corresponds to a 3-tube, as defined in \cite[Def. 6.3]{brauerpaper}. 
These $V$ were explicitly determined in \cite[Lem. 6.4]{brauerpaper}.

\begin{thm}
\label{thm:maindetail}
Assume Hypothesis $\ref{hyp:alltheway}$, and that $B$ is non-local. Let
$\mathcal{E}$ and $\mathcal{E}_1,\mathcal{E}_2,\mathcal{E}_3,\mathcal{E}_4$ be as in Definition 
$\ref{def:organizelist}$.
\begin{enumerate}
\item[(a)] Suppose $V\in \mathcal{E}_1$, and let $q_n(t)\in W[t]$ be the monic polynomial 
of degree $2^{n-2}-1$ from
\cite[Def. 5.3]{brauerpaper}. If $V$ corresponds to a $3$-tube, as defined
in \cite[Def. 6.3]{brauerpaper}, then $R(G,V)\cong W[[t]]/(t\,q_n(t),2\, q_n(t))$. Otherwise
$R(G,V)\cong W[[t]]/(q_n(t))$.
\item[(b)] Suppose $V\in\mathcal{E}_2$. If $Q\in\{2\mathcal{A},2\mathcal{B}\}$ then 
$R(G,V)\cong W[[t]]/(t^2-2\mu \,t)$ for some non-zero $\mu\in W$. 
Otherwise, $R(G,V)\cong W[[t]]/(t^2,2t)$.
\item[(c)] If $V\in \mathcal{E}_3$ then $R(G,V)\cong k$.
\item[(d)] If $V\in\mathcal{E}_4$ then $R(G,V)\cong W$.
\end{enumerate}
In all cases, the ring $R(G,V)$ is isomorphic to a subquotient ring of $WD$.
\end{thm}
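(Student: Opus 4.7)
The plan is to split the proof into the four cases defined by the sublists $\mathcal{E}_1,\mathcal{E}_2,\mathcal{E}_3,\mathcal{E}_4$ of Definition \ref{def:organizelist}, and to attack each in two stages: first compute the universal mod $p$ deformation ring $R(G,V)/pR(G,V)$ from the Morita equivalence between $B$ and its basic algebra $\Lambda$ via the explicit quiver-and-relations description in \cite[Sect. 4]{brauerpaper}, then lift to $R(G,V)$ using the decomposition matrix of $B$ and the generalized decomposition numbers tabulated in \cite[Appendix]{brauerpaper}. I would proceed from the easiest case to the hardest, in the order (d), (c), (b), (a).

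For (d) with $V \in \mathcal{E}_4$, Lemma \ref{lem:organize} gives $\mathrm{Ext}^1_{kG}(V,V)=0$, so the tangent space of the deformation functor vanishes and $R(G,V)$ is a quotient of $W$. For each $M \in \mathcal{E}_{\Lambda,4}$, I would produce an explicit lift of $V$ to a $WG$-module by choosing a $\mathbb{Z}_{\ge 0}$-linear combination of ordinary irreducible characters of $B$ whose reduction to the $2$-regular classes equals the Brauer character of $V$ and whose dimension equals that of $V$; existence of such a lift forces $R(G,V) \cong W$. For (c) with $V \in \mathcal{E}_3$, again $R(G,V)$ is a quotient of $W$, but the position of $V$ at the mouth of a $3$-tube of the stable Auslander--Reiten quiver of $B$ blocks any lift to $W/p^2$. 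Using the $\Omega^2$-orbit analysis from the proof of Lemma \ref{lem:organize} together with the arguments of \cite[Sect. 4]{3sim} and \cite[Sect. 5]{2sim}, one shows that no $\mathbb{Z}$-linear combination of ordinary characters can reduce to the Brauer character of $V$, forcing $R(G,V) \cong k$.

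For (b) with $V \in \mathcal{E}_2$, Lemma \ref{lem:organize} gives $\mathrm{dim}_k\,\mathrm{Ext}^1_{kG}(V,V)=1$, so $R(G,V)/pR(G,V)$ is a quotient of $k[[t]]$. Using the $\Lambda$-module $M\in\mathcal{E}_{\Lambda,2}$, I would compute the obstruction map into $\mathrm{Ext}^2_\Lambda(M,M)$ and read off the mod-$p$ relation, which in each instance turns out to be $t^2=0$ (recall $p=2$). To lift to $W$, I would consult the decomposition matrix of $B$: when $Q \in \{2\mathcal{A},2\mathcal{B}\}$ there are enough distinct ordinary characters whose reductions meet the Brauer character of $V$ to produce a one-parameter family of characteristic-zero lifts over $W[[t]]/(t^2-2\mu\,t)$ for some non-zero $\mu\in W$; otherwise only the trivial lift over $W$ and the mod-$p$ family coexist, and the ring collapses to $W[[t]]/(t^2,2t)$.

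Case (a) will be the main obstacle. Here $V \in \mathcal{E}_1$ has $2$-modular character equal to the restriction of any of the $2^{n-2}-1$ ordinary height-$1$ characters of $B$, which yields $2^{n-2}-1$ pairwise non-isomorphic lifts of $V$ to $W$. I would import the polynomial $q_n(t)\in W[t]$ of \cite[Def. 5.3]{brauerpaper}, whose roots parametrize exactly this family of lifts, and argue that the universal deformation is governed by $q_n(t)$: when $V$ does not correspond to a $3$-tube, the tangent space and the obstruction computation line up to give $R(G,V) \cong W[[t]]/(q_n(t))$; when $V$ does correspond to a $3$-tube in the sense of \cite[Def. 6.3]{brauerpaper}, an additional mod-$p$ deformation appears and adjoins the relations $t\,q_n(t)=2\,q_n(t)=0$. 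The identification of $R(G,V)$ with a subquotient algebra of $WD$ follows in (a) and (c) from the shape of $R(D,k)$ computed in Lemma \ref{lem:local}, and in (b) and (d) is immediate from the explicit rings obtained. The hardest step will be controlling the $\mathrm{Ext}^2$ obstructions for the modules in $\mathcal{E}_1$ across all twenty-four families of basic algebras and matching them exactly with the generalized decomposition number data from \cite[Sect. 5]{brauerpaper}, in order to distinguish the $3$-tube and non-$3$-tube subcases.
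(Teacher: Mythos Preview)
Your overall two-stage strategy (compute $R(G,V)/2R(G,V)$ via $\Lambda$, then lift using decomposition matrices) matches the paper's, and your treatment of part (d) is essentially identical. However, there are gaps and misallocations of effort in the other three parts.

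For part (a), you are planning to redo work that the paper simply cites: the entire analysis of $\mathcal{E}_1$, including the polynomial $q_n(t)$, the $2^{n-2}-1$ lifts, and the $3$-tube dichotomy, is the content of \cite[Thm.~6.6]{brauerpaper}. In this paper that case is one sentence, not ``the main obstacle.''

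For part (c), your proposed mechanism is incorrect as stated. Every Brauer character \emph{is} a $\mathbb{Z}$-linear combination of restrictions of ordinary characters, so ``no $\mathbb{Z}$-linear combination reduces to the Brauer character of $V$'' is false. The obstruction to lifting modules at the boundary of a $3$-tube is not purely character-theoretic; the paper handles the dihedral case by citing \cite[Sect.~5.2]{3sim} and \cite[Prop.~6.3]{2sim}, and in the semidihedral case invokes the structural argument of \cite[Prop.~6.5]{brauerpaper}.

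The genuine gap is in part (b) when $Q\notin\{2\mathcal{A},2\mathcal{B}\}$. You assert that ``only the trivial lift over $W$ and the mod-$p$ family coexist,'' but give no reason why a second $W$-lift is excluded; the decomposition matrix alone does not rule it out. The paper's key idea, which your plan is missing, is to pass to the universal mod $2$ deformation $\overline{U}$ of $V$ (the unique nontrivial self-extension). One first pins down $R(G,V)$ up to the shape $W[[t]]/(t(t-2\mu),a\,2^m t)$ via \cite[Lem.~2.1]{bc5}. Then one observes that in these cases $\overline{U}$ itself lies at the end of a $3$-tube with $\underline{\mathrm{End}}_{kG}(\overline{U})\cong k$, so $R(G,\overline{U})\cong k$ by the part-(c) argument. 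Since a free rank-one lift of $V$ over $W$ (or over $W/2^mW$ with $m\ge 2$) would force a nontrivial lift of $\overline{U}$, this collapses the parameters to $a=1$, $m=1$, giving $W[[t]]/(t^2,2t)$. Without this step your argument for (b) does not close.
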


\begin{proof}
Recall that $p=2$.

Part (a) of Theorem \ref{thm:maindetail} follows from \cite[Thm. 6.6]{brauerpaper}. 
Part (c) follows in the case when $D$ is dihedral from \cite[Sect. 5.2]{3sim} and \cite[Prop. 6.3]{2sim},
and in the case when $D$ is semidihedral by using similar arguments as in the proof of 
\cite[Prop. 6.5]{brauerpaper}. 

To prove parts (b) and (d), let $\Lambda=kQ/I$ be a basic algebra such that
$B$ is Morita equivalent to $\Lambda$, where we assume $\Lambda$ is one of the algebras
in Figure $\ref{fig:list}$.

To prove part (b), suppose that $V\in\mathcal{E}_2$. Since $\mathrm{Ext}^1_{kG}(V,V)\cong k$, 
$R(G,V)$ is isomorphic to a quotient algebra of $W[[t]]$. Note that each $V$ has either a
simple radical quotient or a simple socle. Considering the submodules and quotient modules
of the projective indecomposable $B$-modules,
we see that there is a unique $B$-module $\overline{U}$, up to isomorphism, such that 
we have a short exact sequence
$$0\to V \xrightarrow{\iota} \overline{U} \xrightarrow{\pi} V \to 0.$$
Therefore, $\overline{U}$ defines a lift of $V$ over $k[t]/(t^2)$ where we let $t$ act as the composition
$\iota\circ\pi$. Moreover, we see that $\iota(V)$ is the unique submodule of
$\overline{U}$ that is isomorphic to $V$, and $\pi$ induces a $kG$-module isomorphism
$\varphi:\overline{U}/\iota(V)\to V$. Since $\mathrm{Ext}^1_{kG}(\overline{U},V)=0$ and since 
the kernel of every surjective $kG$-module homomorphism $\overline{U}\to V$ is equal to
$\iota(V)$, we can argue as in the proof of \cite[Lemma 2.5]{3quat} to show that $R(G,V)/2R(G,V)$ is 
isomorphic to $k[t]/(t^2)$ and that the universal mod 2 deformation of $V$ is given by the isomorphism 
class of $\overline{U}$. Using the decomposition matrices provided in \cite[Appendix]{brauerpaper}
together with \cite[Prop. (23.7)]{CR},
we see that $V$ always has at least one lift over $W$. Therefore, it follows by \cite[Lem. 2.1]{bc5}
that $R(G,V)\cong W[[t]]/(t(t-2\mu),a2^mt)$ for certain $\mu\in W$, $a\in\{0,1\}$ and $m\in\mathbb{Z}^+$
depending on $V$.

In the case when $Q\in\{2\mathcal{A},2\mathcal{B}\}$, the decomposition
matrix of $B$ together with \cite[Prop. (23.7)]{CR} show that $V$ has 2 non-isomorphic 
lifts over $W$, which implies that $\mu\neq 0$ and $a=0$. In other words, 
$R(G,V)\cong W[[t]]/(t^2-2\mu \,t)$ for some non-zero $\mu\in W$.

On the other hand, if $Q\not\in\{2\mathcal{A},2\mathcal{B}\}$ then the defect groups of $B$
must be semidihedral. Moreover, $\overline{U}$ lies at the end of a 3-tube and 
the stable endomorphism ring of $\overline{U}$ is isomorphic to $k$. 
If $a=0$ then $R(G,V)\cong  W[[t]]/(t(t-2\mu))$ is free over $W$. If $a=1$ then
$R(G,V)/2^mR(G,V)\cong (W/2^mW)[[t]]/(t(t- 2\mu))$ is free over $W/2^mW$. 
Therefore it follows that if $a = 0$ (resp. $a = 1$), then there is a lift of $\overline{U}$, 
when regarded as a $kG$-module, over $W$ (resp. $W/2^mW$). However, arguing similarly as in 
the proof of \cite[Prop. 6.5]{brauerpaper}, we see that $R(G,\overline{U})\cong k$, which means 
we must have $a = 1$ and $m = 1$. This proves part (b) of Theorem \ref{thm:maindetail}.

To prove part (d), suppose that $V\in\mathcal{E}_4$. Since $\mathrm{Ext}^1_{kG}(V,V)=0$, 
$R(G,V)$ is isomorphic to a quotient algebra of $W$. Since $V$ is of length at most 4 and
has either a simple radical quotient or a simple socle, we can use the decomposition matrix of 
$B$ provided in \cite[Appendix]{brauerpaper} together with \cite[Prop. (23.7)]{CR} to see
that $V$ has a lift over $W$. This implies $R(G,V)\cong W$.

The last statement of the theorem is obvious for parts (c) and (d). For part (a), this follows from
\cite[Lem. 5.5]{brauerpaper}. For part (b), this follows since $W[[t]]/(t^2-2\mu \,t)$ 
for non-zero $\mu\in W$ (resp. $W[[t]]/(t^2,2t)$)
is isomorphic to a subalgebra (resp. quotient algebra) of $W[\mathbb{Z}/2]\cong W[[t]]/(t^2-2t)$.
This completes the proof of Theorem \ref{thm:maindetail}.
\end{proof}

%%%%%%%%%%%%%%%%%%%%%%%%%%%%%%%%%%%%%%%%%%%%%%%%%%%%%%%%%
%% Bibliography
%%%%%%%%%%%%%%%%%%%%%%%%%%%%%%%%%%%%%%%%%%%%%%%%%%%%%%%%%

\end{document}